\documentclass{elsarticle}

\usepackage{amsmath}
\usepackage{amssymb}
\usepackage{latexsym,pifont,color,comment}
\usepackage{algorithm,algorithmic}
\usepackage{amsthm}
\newtheorem{thm}{Theorem}

\def\B{{\bf B}}
\def\half{\frac{1}{2}}
\def\v{{\bf v}}

\def\E{{\bf E}}
\def\J{{\bf J}}
\def\reals{{{\rm l} \kern -.15em {\rm R} }}
\def\xv{{\bf x}}
\def\nv{{\bf n}}

\def\Tm{{\mathcal T}}

\journal{Journal of Computational Physics}

\begin{document}

\begin{frontmatter}

\title{A positivity-preserving
	high-order semi-Lagrangian discontinuous Galerkin scheme for the
	Vlasov-Poisson equations}

\author[author1]{James A. Rossmanith\fnref{labc}}
\ead{rossmani@math.wisc.edu}

\author[author1]{David C. Seal}
\ead{seal@math.wisc.edu}

\address[author1]{Department of Mathematics, University of Wisconsin,
480 Lincoln Drive, Madison, WI  53706-1388, USA}

\fntext[labc]{Corresponding author}

\begin{abstract}
The Vlasov-Poisson equations describe the evolution of a collisionless plasma,
represented through a probability density function (PDF) that self-interacts via an electrostatic
force. One of the main difficulties in numerically solving this system is the severe time-step
restriction that arises from parts of the PDF associated with moderate-to-large velocities.
The dominant approach in the plasma physics community for removing these time-step restrictions
is the so-called particle-in-cell (PIC) method,
which discretizes the distribution function into a set of macro-particles,
while the electric field is represented on a mesh.
Several alternatives to this approach exist, including fully Lagrangian, fully Eulerian, and so-called semi-Lagrangian
methods. The focus of this work is the semi-Lagrangian approach, which begins with a grid-based
Eulerian representation of both the PDF and the electric field, then evolves the PDF via  Lagrangian dynamics,
and finally projects this evolved field back onto the original Eulerian mesh. 
In particular, we develop in this work a method that discretizes the 1+1 Vlasov-Poisson system via  a high-order
discontinuous Galerkin (DG) method in phase space, and an operator split, semi-Lagrangian method in time. 
Second-order accuracy in time is relatively easy to achieve via Strang operator splitting. With additional work,
using  higher-order splitting and a higher-order method of characteristics, we also demonstrate how to push this scheme to fourth-order accuracy
in time. We show how to resolve all of the Lagrangian dynamics in such a way that mass is exactly
conserved, positivity is maintained, 
 and high-order accuracy is achieved.
The Poisson equation is solved to high-order via the smallest stencil local discontinuous Galerkin (LDG)
approach. We test the proposed scheme on several standard test cases.
\end{abstract}

\begin{keyword} 
Discontinuous Galerkin; Semi-Lagrangian; Vlasov-Poisson; Plasma Physics; High-Order Schemes; Positivity-Preserving Limiters
\end{keyword}

\end{frontmatter}



\section{Introduction}
The Vlasov equation in its various incarnations (e.g.,
Vlasov-Maxwell, Vlasov-Darwin, and
Vlasov-Poisson) models the dynamics of collisionless plasma. 
Plasma is the state of matter where electrons have
dissociated from their nuclei, creating a mixture of
interacting charged particles. This mixture can evolve via a variety of effects,
including electromagnetic interactions 
and through particle-particle collisions. In the
collionless limit, the mean free-path is much larger than 
the characteristic length scale of the plasma; and therefore,
particle-particle collisions are dropped from the mathematical model.
Vlasov models are widely used in both astrophysical applications (e.g., 
\cite{article:BeBh07,article:Bi01,article:SmGr06}), as well
as in laboratory settings (e.g., \cite{christlieb2004eps,article:Vay04, 
article:PaHi97,article:ChHiKe00,article:Idom08}).

The development of accurate and efficient numerical methods for
the solution of the Vlasov equations are faced with a variety
of numerical challenges, the most important of which we describe below.
\begin{itemize}
    \item {\bf High dimensionality.} The Vlasov system is
            a nonlinear and nonlocal advection equation in six
            phase space dimensions (${\bf x}\in \reals^3$ and
             ${\bf v}\in \reals^3$) and time -- this of often referred
             to as $3+3+1$ dimensions. Even though the Vlasov
             equation is in many ways mathematically simpler than 
             fluid models, the fact that it lives in a space of
             twice the number of dimensions makes it computationally
             much more expensive to solve.

\smallskip

  \item {\bf Conservation and positivity.} In fluid models,
  	conservation of mass, momentum, and energy are
	often relatively easy to guarantee in a numerical
	discretization, since each of these quantities is
	a dependent variable of the system. In Vlasov models
	it is generally more difficult to exactly maintain these
	quantities in the numerical discretization. Exact positivity of the 
	probability density function is also not guaranteed by many
	standard discretizations of the Vlasov system; and therefore,
	additional work in choosing the correct approximation spaces
	is often required.	
  	
\smallskip
	
  \item {\bf Small time steps due to ${\bf v} \in \reals^3$.}
  	In the non-relativistic case, the advection
	velocity of the density function in phase
	space depends linearly on the components of the
	velocity vector $\v \in \reals^3$ (see equation 
	\eqref{eqn:vlasov_maxwell} in \S\ref{sec:equations}). 
	Since it is in general possible to
	have
	``particles'' in the Vlasov system that travel arbitrarily fast, 
	 there will be a 
  	severe time-step restriction, relative to the dynamics
	of interest, that arises from parts of the PDF 
	associated with moderate-to-large velocities.	
\end{itemize}

Several approaches have been introduced to try and solve
some of these problems,
including particle-in-cell methods, Lagrangian particle methods,
and grid-based semi-Lagrangian methods. We briefly summarize each
of these approaches below.
\begin{itemize}
\item{\bf Particle-in-cell methods.}  Particle-in-cell (PIC) methods
are ubiquitous in both astrophysical (e.g., \cite{article:Vay04})
and laboratory plasma (e.g., \cite{article:Bi01}) application problems.
The basic approach is outlined in the celebrated textbooks
of Birdsall and Langdon \cite{birdsall2004plasma} and 
Hockney and Eastwood \cite{hockney1988computer}, both
of which appeared in the mid-to-late 1980s. Modern improvements
to these methods are still topics of current research (e.g., 
adaptive mesh refinement \cite{article:Vay04}, very high-order variants
\cite{article:JaHe06,article:JaHe09}, etc$\ldots$).
The basic idea is that the distribution function is discretized
into a set of macro-particles (Lagrangian representation), while the 
electromagnetic field is represented on a mesh (Eulerian representation). 
The main advantage of this approach is that
positivity and mass conservation are essentially automatic,
the small time step restriction is removed due to the
fact that the particles are evolved in a Lagrangian framework,
and the electromagnetic equations can be solved via standard
mesh-based methods.
The main disadvantages of this method are: (1)
numerical
errors are introduced due to the  interpolations
that must de done to exchange information between
 the particles and fields, and (2) error control is non-trivial
 since particles may either cluster or generate rarefied regions
 during the evolution of the plasma.

\smallskip

\item{\bf Lagrangian particle methods.} One possible alternative
to the PIC methodology is to go to a completely Lagrangian
framework -- this removes the need to interpolate between
the particles and fields. Such approaches are commonplace in several
application areas such as 
many body dynamics in astrophysics \cite{article:BarnesHut86},
vortex dynamics  \cite{article:LiKr01}, as well
as in plasma physics  \cite{christlieb2006gfp}. 
The key is that the potential (e.g., gravitational potential, streamfunction, or electric potential) is calculated 
by integrating the point charges
 represented by the Lagrangian particles against a Green's function.
Since the charges are point particles, evaluating this integral reduces to
computing sums over the particles. Naive methods would need
${\mathcal O}(N^2)$ floating point operations to evaluate all of these
sums, where $N$ is the number of particles, 
but fast summation methods such as treecode methods \cite{article:BarnesHut86,article:LiKr01} 
and the fast multipole method \cite{article:GreenRohk87} can be
used to reduce this to ${\mathcal O}(N \log N)$.
The main disadvantage of this approach is that it relies
 on having a Green's function, which for more complicated
dynamics (i.e., full electromagnetism), may be difficult
to obtain.

\smallskip

\item{\bf Semi-Lagrangian grid-based methods.}
Another alternative to PIC is to switch to a completely
grid-based method. Such an approach allows for
a variety of high-order spatial discretizations, and can
be evolved forward in time via so-called {\it semi-Lagrangian}
time-stepping. 
The basic idea is that the PDF sits initially on a grid; the PDF is then
evolved forward in time using Lagrangian dynamics; and finally, 
the new PDF is
projected back onto the original mesh. This gives many of the
advantages of particle methods (i.e., no small time-step
restrictions), but retains a nice grid structure for both
the PDF and the fields, allowing extension to
very high-order accuracy. There have been several
contributions to this approach over the last few years. One of the
first papers that developed a viable semi-Lagrangian method
was put forward by Cheng and Knorr \cite{article:ChKn76}.
More recent activity on this approach includes the work of
Parker and Hitchon \cite{article:PaHi97},
Sonnendr\"ucker and his collaborators (see for example \cite{article:CrMeSo10,article:FiSo03,article:SoRoBeGh99, article:CoSoDiBeGh99, article:BeSeSo05,article:CrLaSo07,article:CrReSo09,article:BeCrDeSo09}),
and Christlieb and Qiu \cite{article:QiuCh10}.

\end{itemize}

The goal of the current work is to develop a high-order,
grid-based, semi-Lagrangian method for solving the 1+1 Vlasov-Poisson
equation. We focus on the 1+1 case, leaving the problem of
high-dimensionality for future work. Our discretization is based on high-order discontinuous Galerkin  representations and a high-order operator split semi-Lagrangian
time-stepping method. We argue that this approach is a promising method
that produces very accurate results at relatively low computational
expense. 
 
This paper begins with a brief review of the
Vlasov equations in \S \ref{sec:equations}.
Part of the focus of the present
work is to develop a higher-order version of the classical
Cheng and Knorr \cite{article:ChKn76} operator splitting
method, which we review in \S \ref{sec:cheng_knorr}. The spatial
discretization for the proposed method will be based on the
discontinuous Galerkin method, which we briefly review in
\S \ref{sec:dgfem}. The heart of this paper is \S \ref{sec:numericaL_method}, which
details all the aspects of the proposed method, and in particular,
explains how to achieve high-order in space and time, mass conservation,
and positivity of the distribution function. Finally in \S 
\ref{sec:numericaL_examples} we apply the proposed scheme to a
variety of standard test cases for the Vlasov-Poisson system, including
the two-stream instability problem and Landau damping.

\section{Mathematical equations}
\label{sec:equations}
The Vlasov system
describes the evolution of a probability density function (PDF) in phase space:
\begin{equation}
	f_s(t, {\bf x}, {\bf v}): \reals^{+} \times \reals^d \times \reals^d
	\rightarrow \reals^S,
\end{equation}	
where $d$ is the spatial dimension and $S$ represents the
number of plasma species. This PDF
denotes the probability of finding a particle of species $s$ at time $t$, at location
${\bf x}$, and with velocity ${\bf v}$. Although the  PDF is not itself a physical observable, its moments represent various physically observable quantities:
\begin{alignat}{2}
\rho_s(t,{\bf x}) &:= \int_{\reals^d}  \, f_s \, d{\bf v}, \qquad && \text{(mass density of species $s$)}, \\
	\rho_s {\bf u}_s(t,{\bf x}) &:=  \int_{\reals^d} \, {\bf v} \, f_s \, d{\bf v}, 
	\qquad && \text{(momentum density of species $s$)},  \\
	{\mathcal E}_s(t,{\bf x}) &:=  \frac{1}{2} \int_{\reals^d}  \, \| {\bf v} \|^2 \, f_s \, d{\bf v},
	 \qquad && \text{(energy density of species $s$)}.
\end{alignat}

Under the assumptions of a non-relativistic and collisionless plasma,
the PDF for each species obeys the Vlasov equation, which is
an advection equation in $({\bf x}, {\bf v})$ phase space:
  \begin{equation}
  \label{eqn:vlasov_maxwell}
\frac{\partial f_s}{\partial t} + \v \cdot \nabla_{\xv}
  	f_s + \frac{q_s}{m_s} \left( \E + \v \times \B \right)
	 \cdot \nabla_{\v} f_s = 0.
\end{equation}
The ``particles'' represented by this kinetic description do not
interact through collisional processes; and instead, are only coupled
indirectly through the electromagnetic field. In general, the
electromagnetic field satisfies Maxwell's equations:
\begin{gather}
	 \frac{\partial}{\partial t}
	 \begin{bmatrix}
	 	\B \\ \E
	 \end{bmatrix} +	
	 \nabla \times
	 \begin{bmatrix}
	    \E \\ -c^2 \B
	 \end{bmatrix} = 
	 \begin{bmatrix}
	 	0 \\ -c^2 \J
	 \end{bmatrix}, \\
	 \nabla \cdot \B = 0, \quad
	 \nabla \cdot \E = c^2 \sigma,
\end{gather}
where ${\bf B}$ is the magnetic field, ${\bf E}$ is the electric field, and
the total charge density and total current densities
are given by the following:
\begin{equation}
	 \sigma = \sum_s \frac{q_s}{m_s} \rho_s, \quad
	 \J = \sum_s \frac{q_s}{m_s} \rho_s {\bf u}_s.
\end{equation}
Note that the electromagnetic field variables, $\B(t,{\bf x})$ and
$\E(t,{\bf x})$, as well as the mass and momentum densities,
$\rho_s(t,{\bf x})$ and $\rho_s {\bf u}_s(t,{\bf x})$, only depend
on time and the spatial coordinates ${\bf x}$. 
  
In this work we will not consider the full Vlasov-Maxwell system
for a many species plasma; and instead, we only consider the
single-species Vlasov-Poisson equation. To arrive at the Vlasov-Poisson
system, we start with Vlasov-Maxwell and 
assume that the charges are slow-moving
in comparison to the speed of light; this allows us to replace the full
electromagnetic equations with electrostatics. Furthermore, 
we consider only two-species: one dynamically evolving
species, which we take without loss of generality
to have positive charge and unit mass $m=1$, and one
stationary background species that has a charge of opposite sign to the dynamic
species.
Because the background charge is stationary, we will only need
to solve a single-species Vlasov equation. These assumptions conspire
to form the Vlasov-Poisson equations:
\begin{gather}
   \label{eqn:VlasovPoisson}
   	f_{,t}  +  {\bf v} \cdot f_{, {\bf x}} + \nabla \phi \cdot  f_{, {\bf v}} = 0, \\
		\nabla^2 \phi =  \rho(t,{\bf x}) - \rho_0,
\end{gather}
where $\phi$ is the electric potential: $\nabla \phi = {\bf E}$, and
$-\rho_0$ is the stationary background charge density. 

The Vlasov-Poisson system contains an infinite number of
quantities that are conserved in time. These can be used
as diagnostics in a numerical discretization. We list four quantities that
will be used in diagnosing our proposed scheme, all of which should
remain constant in time:
\begin{align}
\label{eqn:L1norm}
	\| f \|_{L_1} &:=\int_{\reals^d} \int_{\reals^d}   \bigl| f \bigr| \, d {\bf v} \, d{\bf x}, \\
\label{eqn:L2norm}
	\| f \|_{L_2} &:= \left( {\int_{\reals^d} \int_{\reals^d}    f^2 \, d {\bf v} \, d{\bf x}} \right)^{\frac{1}{2}}, \\	
\label{eqn:energy}
	\text{Total energy} &:=  \frac{1}{2} 
		\int_{\reals^d} \int_{\reals^d}  \| {\bf v} \|^2 f \, d {\bf v} \, d{\bf x}
		 +  \frac{1}{2} \int_{\reals^d}  \| {\bf E} \|^2  \, d{\bf x}, \\
\label{eqn:entropy}
	\text{Entropy} &:= -\int_{\reals^d} \int_{\reals^d}   f \, \log(f) \, d {\bf v} \, d{\bf x}.
\end{align}

Finally, we point out that in the current work we are concerned
exclusively with the 1+1 dimensional version of the above equations
with periodic boundary conditions in $x$. In this case, the Vlasov-Poisson
system on $\Omega = (t,x,v) \in \reals^+ \times [-L,L] \times \reals$ is:
\begin{gather}
\label{eqn:VP1d}
	f_{,t} + v f_{,x} + E(t,x) \, f_{,v} = 0, \\
\label{eqn:VP1dp}
	E_{,x} = \rho(t,x) - \rho_0,
\end{gather}
with periodic boundary conditions:
\begin{align*}
	f(t,-L,v) = f(t,L,v), \quad E(t,-L) = E(t,L), \quad \text{and} \quad
		\phi(t,-L) = \phi(t,L).
\end{align*}
In these expressions we used the shorthand notation:
\[
	x := x^1, \quad v := v^1, \quad \text{and} \quad E := E^1.
\]
The total and background densities are 
\begin{gather}
	\rho(t,x) := \int_{-\infty}^{\infty} f(t,x,v) \, dv \qquad \text{and}
	\qquad
	\rho_0 := \frac{1}{2L} \int_{-L}^{L} \rho(t,x) \, dx.
\end{gather}
Note that $\rho_0$ is in fact constant in time, due to
conservation of mass on the periodic domain $[-L,L]$.

\section{Strang operator splitting}
\label{sec:cheng_knorr}
If we momentarily freeze the electric field in time, 
the Vlasov equation \eqref{eqn:VlasovPoisson} can be viewed
as an advection equation of the following form:
\begin{equation}
	f_{,t} + {\bf a}(\v) \cdot f_{,\xv} + {\bf b}(\xv) \cdot f_{,\v} = 0.
\end{equation}
Cheng and Knorr \cite{article:ChKn76} realized that such an equation
can be handled very efficiently if split into the following
two sub-problems:
\begin{align*}
   \text{Problem ${\mathcal A}$:} & \quad f_{,t} + {\bf a}(\v) \cdot f_{,\xv} = 0, \\
   \text{Problem ${\mathcal B}$:} & \quad f_{,t} + {\bf b}(\xv) \cdot f_{,\v} = 0.
\end{align*}
The key benefit of this splitting is that each operator is now a constant
coefficient advection equation (i.e., the transverse coordinate acts only as
a parameter),  each of which can be handled very simply
with a variety of spatial discretization and semi-Lagrangian time-stepping.
The down side of this approach,
of course, is the introduction of splitting error.

Cheng and Knorr \cite{article:ChKn76} developed a
second order accurate version of this scheme 
via Strang operator splitting \cite{article:St68}.
Their scheme is summarized in Algorithm \ref{alg1}.
It is worth pointing out that the electric field computed in 
Step 2, $\E^{n+\frac{1}{2}}$, is second order accurate in time,
even though it is computed after advection in the ${\bf x}$ variables only.

\noindent {\bf Claim.}
{\it Assuming that the current solution at time $t=t^n$ is known exactly,
and that each step in Algorithm \ref{alg1} is carried out exactly
in space, velocity, and time, the density computed in Step 2 is
second order accurate in time:
\[
	\rho^{n+\frac{1}{2}} = \rho\left(t^n + \frac{\Delta t}{2}, {\bf x} \right) + {\mathcal O}\left(
		\Delta t^2 \right).
\]
This also implies that the electric field in Step 2 is 
second order accurate in time:
\[
	\E^{n+\frac{1}{2}} = \E\left(t^n + \frac{\Delta t}{2}, {\bf x} \right) + {\mathcal O}\left(
		\Delta t^2 \right).
\]
}

\begin{proof}
By assumption the PDF after the first step satisfies the following relationship:
\[
	\tilde{f}\left({\bf x},{\bf v}\right) := f\left(t^n, \xv - \frac{\Delta t}{2} \v, \v \right).
\]
We integrate this relationship in velocity to compute the density at time 
$t^n + \frac{\Delta t}{2}$:
\[
	\begin{split}
	\rho^{n+\frac{1}{2}} &:= \int_{\bf v} \tilde{f}\left({\bf x},{\bf v}\right) d{\bf v}
		=  \int_{\bf v}  f\left(t^n, \xv - \frac{\Delta t}{2} \v, \v \right) d{\bf v} \\
		&=  \int_{\bf v}  f\left(t^n, \xv, \v \right) d{\bf v} 
			- \frac{\Delta t}{2} \nabla_{\xv} \cdot \left\{
			\int_{\bf v}  {\bf v} f\left(t^n, \xv, \v \right) d{\bf v} \right\} +
		{\mathcal O}(\Delta t^2) \\ &=
		\rho^n - \frac{\Delta t}{2} \nabla_{\xv} \cdot \left( \rho^n \, {\bf u}^n \right)
			+ {\mathcal O}(\Delta t^2).
		\end{split}
\]
Finally, we use the fact that 
\[
	\rho^n_{,t} = - \nabla_{\bf x} \left(\rho^n {\bf u}^n \right),
\]
in order to assert that
\[
	\rho^{n+\frac{1}{2}} = \rho^n + \frac{\Delta t}{2} \rho^n_{,t}
			+ {\mathcal O}(\Delta t^2)=
				\rho\left(t+\frac{\Delta t}{2},{\bf x} \right)+ {\mathcal O}(\Delta t^2),
\]
which proves the claim.
\end{proof}

\begin{algorithm}
\caption{Cheng and Knorr \cite{article:ChKn76} operator split algorithm.}
\label{alg1}
\begin{algorithmic}
  \STATE 1. $\frac{1}{2}\Delta t$ \quad step on \quad $f_{,t} + {\bf v} \cdot f_{,{\bf x}} = 0$.
  \STATE
   \STATE 2. Solve \quad $\nabla^2 \phi = \rho^{n+\frac{1}{2}} -  \rho_0$,
  	\quad and  compute \quad ${\bf E}^{n+\frac{1}{2}} = \nabla \phi$.
 \STATE
   \STATE 3. $\Delta t$ \quad step on \quad $f_{,t} + {\bf E}^{n+\frac{1}{2}} \cdot f_{,{\bf v}} = 0$.
   \STATE
   \STATE 4. $\frac{1}{2} \Delta t$ \quad step on \quad $f_{,t} + {\bf v} \cdot f_{,{\bf x}} = 0$.
\end{algorithmic}
\end{algorithm}

The original method of Cheng and Knorr \cite{article:ChKn76} employs
a cubic spline spatial discretization. In the past few years, work on semi-Lagrangian
solvers using Algorithm \ref{alg1} with a variety of spatial discretizations, including
modified cubic spline interpolants, has been carried out by
 Sonnendr\"ucker and his collaborators (see for example \cite{article:CrMeSo10,article:FiSo03,article:SoRoBeGh99, article:CoSoDiBeGh99, article:BeSeSo05,article:CrLaSo07,article:CrReSo09,article:BeCrDeSo09}).
Another recent contribution to this approach was the method of Christlieb and Qiu \cite{article:QiuCh10}, who combined the operator split method with high-order
 WENO
(weighted essentially non-oscillatory) finite differences for the spatial derivatives.

The focus of the present work is to again consider operator splitting techniques,
but this time with high-order discontinuous Galerkin (DG) method for the spatial discretization,
and with fourth-order operator splitting techniques. 
We describe the basic DG framework in the next section
\S \ref{sec:dgfem} and various details of both a second and a fourth-order
in time operator split approach in \S \ref{sec:numericaL_method}.

\section{The discontinuous Galerkin (DG) method}
\label{sec:dgfem}
The modern form of the discontinuous Galerkin (DG) method
was developed in a series of papers by Bernardo Cockburn, 
Chi-Wang Shu, and their collaborators \cite{article:CoShu1,article:CoShu2,article:CoShu3,article:CoShu4,article:CoShu5}. 
In this section we briefly
review the DG method for a general two-dimensional conservation
law on a Cartesian mesh. This section will also serve to introduce
the notation that we will use throughout this paper.

Consider a general 2D conservation
law of the form:
\begin{equation}
\label{eqn:conslaw}
q_{,t} +  f(q,t,\xv)_{,x} + g(q,t,\xv)_{,y} = 0,
\quad \text{in} \quad \xv \in \Omega \subset \reals^2,
\end{equation}
with appropriate initial and boundary conditions.
In this equation $q(t,\xv) \in \reals^m$ is the vector of conserved variables and
$f(q,t,\xv), g(q,t,\xv) \in \reals^m$ are the flux functions in the $x$ and $y$-directions,
respectively.
 We assume that equation (\ref{eqn:conslaw}) is hyperbolic, 
 meaning that the family of $m\times m$ matrices defined by
 \begin{equation}
   \label{eqn:fluxJacobian}
    A(q,\xv; \nv) = \nv \cdot \left( 
    \frac{\partial f}{\partial q}, \, 
    \frac{\partial g}{\partial q} \right)^{T}
\end{equation}
are diagonalizable with real eigenvalues for
all $\xv$ and $q$ in the domain of interest and
for all $\| \nv \| = 1$.

We construct a Cartesian grid over $\Omega = 
[a_x, \, b_x] \times [a_y, \, b_y]$,
with uniform grid spacing $\Delta x$ and $\Delta y$ in each
coordinate direction. The mesh elements are centered 
at the coordinates
\begin{equation}
          x_i =  a_x  +  \left( i - \frac{1}{2} \right)  \Delta x
          \quad \text{and} \quad
          y_j =  a_y  +  \left( j - \frac{1}{2} \right) \Delta y.
\end{equation}
On this grid we define the {\it broken} finite element space
\begin{equation}
\label{eqn:broken_space}
    W^h = \left\{ w^h \in L_{\infty}(\Omega): \,
    w^h |_{\Tm} \in P^{q}, \, \forall \Tm \in \Tm_h \right\},
\end{equation}
where $W^{h}$ is shorthand notation for
$W^{\Delta x, \Delta y}$.
The above expression means that on each element $\Tm$, $w^h$ will
be a polynomial of degree at most $q$, and no continuity is assumed
across element edges.
Each element can be mapped to the canonical element $(\xi, \eta) \in [-1,1] \times [-1,1]$ via the linear transformation:
\begin{equation}
	x = x_i + \xi \, \frac{\Delta x}{2}, \quad 
	y = y_j + \eta \, \frac{\Delta y}{2}.
\end{equation}
The normalized Legendre
polynomials up to degree four on the canonical element can be written as
\begin{equation*}
\begin{split}
\varphi^{(\ell)} = \Biggl\{ &1, \, \, \,  \sqrt{3} \, \xi, \, \, \,  \sqrt{3} \, \eta, 
  \, \, \,   3 \, \xi \eta, \, \, \,  \frac{\sqrt{5}}{2} \left( 3 \xi^2 - 1 \right),
\, \, \,  \frac{\sqrt{5}}{2} \left( 3 \eta^2 - 1 \right), \\ 
&\frac{\sqrt{15}}{2} \eta \, (3 \xi^2 - 1), \, \, \,
\frac{\sqrt{15}}{2} \xi \, (3 \eta^2 - 1), \, \, \,
\frac{\sqrt{7}}{2} (5 \xi^3 - 3 \xi), \, \, \,
\frac{\sqrt{7}}{2} (5 \eta^3 - 3 \eta), \\
&\frac{\sqrt{21}}{2} \eta \, (5 \xi^3 - 3 \xi), \, \, \,
\frac{\sqrt{21}}{2} \xi \, (5 \eta^3 - 3 \eta), \, \, \,
\frac{5}{4} (3 \xi^2 - 1) (3 \eta^2 - 1), \\
&\frac{105}{8} \xi^4  - \frac{45}{4} \xi^2  + \frac{9}{8}, \, \, \,
\frac{105}{8} \eta^4  - \frac{45}{4} \eta^2  + \frac{9}{8} \Biggr\}.
\end{split}
\end{equation*}
These basis functions are orthonormal with respect
to the following inner product:
\begin{equation}
   \Bigl\langle \varphi^{(m)}, \, \varphi^{(n)} \Bigr\rangle :=  
   \frac{1}{4}  \int_{-1}^{1} \int_{-1}^{1}
    \varphi^{(m)}(\xi,\eta) \, \varphi^{(n)}(\xi,\eta) \, d\xi \, d\eta = 
     \delta_{mn}.
\end{equation}
We will look for approximate solutions of (\ref{eqn:conslaw})
that have the following form:
\begin{equation}
\label{eqn:q_ansatz}
q^h(t, \xi, \eta) \Bigl|_{\Tm_{ij}}  :=
  \sum_{k=1}^{M(M+1)/2}  Q^{(k)}_{ij}(t) \, \varphi^{(k)}(\xi, \eta),
\end{equation}
where $M$ is the desired order of accuracy in space (i.e., for fifth
order: $M=5$ and $M(M+1)/2 = 15$).
The Legendre coefficients of the initial conditions at $t=0$ are 
determined from the $L_2$-projection of $q^h(x,y,0)$ onto the Legendre basis functions:
\begin{align}
\label{eqn:l2project}
Q^{(k)}_{ij}(0)  &:= \Bigl\langle q^h(0, \xi, \eta), \, \varphi^{(k)}(\xi,\eta) \Bigr\rangle.
\end{align}
In practice, these double integrals are evaluated using
standard 2D Gaussian quadrature rules involving $M^2$ points. 

In order to determine the Legendre coeficients for $t>0$,
we multiply conservation law (\ref{eqn:conslaw}) by the test function $\varphi^{(\ell)}$
and integrate over the grid cell $\Tm_{ij}$.
After the appropriate integrations-by-part, 
we arrive at the following semi-discrete evolution equations:
\begin{equation}
\label{eqn:semidiscrete}
\frac{d}{dt} \,
   Q^{(\ell)}_{ij} = {\mathcal L}^{(\ell)}_{ij}(Q,t) := N^{(\ell)}_{ij}
- \frac{\Delta {\mathcal F}_{ij}^{(\ell)}}{\Delta x}
- \frac{\Delta {\mathcal G}_{ij}^{(\ell)}}{\Delta y} ,
\end{equation}
where 
\begin{align}
	\label{eqn:Nvals}
	N^{(\ell)}_{ij} &= \frac{1}{2}
   \int_{-1}^{1} \int_{-1}^{1}
   \left[ \, \frac{1}{\Delta x} \, \varphi^{(\ell)}_{, \xi} \, f(q^h,t,\xv) 
   	+ \frac{1}{\Delta y} \, \varphi^{(\ell)}_{, \eta} \, g(q^h,t,\xv) \, \right] \, 
   d\xi  \, 
   d\eta, \\
	\label{eqn:Fl1}
	\Delta {\mathcal F}^{(\ell)}_{ij} &= 
	\left[ \frac{1}{2} \int_{-1}^{1}
	\varphi^{(\ell)} \,
		f(q^h,t,\xv)  \, 
		 d\eta \right]_{\xi=-1}^{\xi=1},  \\ 
		 \label{eqn:Gl1}
		\Delta {\mathcal G}^{(\ell)}_{ij} &= \left[
		 \frac{1}{2} \int_{-1}^{1}
	\varphi^{(\ell)} \,
		g(q^h,t,\xv)  \, d\xi
		 \right]_{\eta=-1}^{\eta=1}.
\end{align}
The integrals in (\ref{eqn:Nvals}) can
be numerically approximated via standard 
2D Gaussian quadrature rules involving $(M-1)^2$ points. 
The integrals in (\ref{eqn:Fl1}) and  (\ref{eqn:Gl1}) can
be approximated with standard 1D Gauss quadrature
rules involving $M$ points.

\section{A high-order semi-Lagrangian DG method}
\label{sec:numericaL_method}
We describe in this section a semi-Lagrangian discontinuous
Galerkin method for solving the Vlasov-Poisson system.
This method will have all of the following properties:
\begin{enumerate}
\item {\it Unconditionally stable};
 \item {\it High-order accurate in space} ($5^{\text{th}}$ order);
 \item {\it High-order accurate in time} ($4^{\text{th}}$ order);
\item {\it Mass conservative}; and
\item {\it Positivity-preserving}.
\end{enumerate}
All of these properties are explained in detail in this section.

We begin by explaining the basic
idea on the constant coefficient 1D advection equation in
\S \ref{sec:1dadv}. The extension of this 1D scheme
to the 1+1 Vlasov equation via Strang operator splitting
is described in section
\S \ref{sec:vlasov_solver}. A simple and efficient local
discontinuous Galerkin solver for the Poisson equation 
is described in \S \ref{sec:poisson}. The
generalization of the Strang-split scheme to higher-order
splitting is shown in \S \ref{sec:higherordersplit}.
Basic properties including
conservation of mass and positivity in the mean are
proved in \S \ref{sec:cons_mass}. Finally, in \S \ref{sec:positivity}
a limiter that provides global pointwise positivity is described.

\begin{figure}[!t]
\begin{center}
   (a) \, \includegraphics[width=48mm]{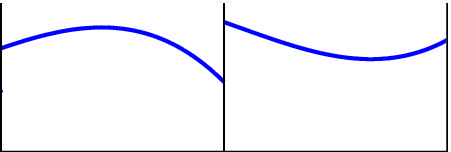} \qquad
   (b) \, \includegraphics[width=48mm]{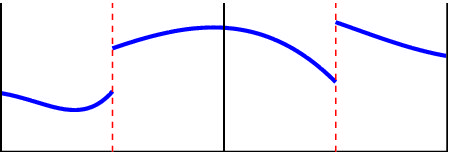} 
   
   \vspace{4mm}
   
   (c) \, \includegraphics[width=48mm]{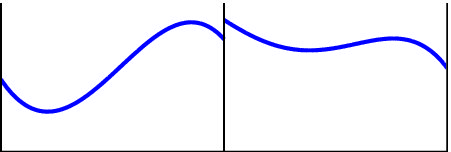}
  \caption{Illustration of the shift + project method for solving
  	the constant coefficient advection equation in 1D as 
	described in \S \ref{sec:1dadv}. Panel (a)
	shows piecewise polynomial initial data; Panel (b) shows
	the initial data shifted by some amount (i.e., the exact evolution of
	the initial data); and finally, Panel (c) shows the
	solution after it has been re-projected back onto the
	original piecewise polynomial basis.\label{fig:shift_proj}}
\end{center}
\end{figure}

\subsection{A toy problem: the 1D advection equation}
\label{sec:1dadv}
Consider the 1D constant coefficient advection equation:
\begin{equation}
	f_{,t} + v f_{,x} = 0, \quad (t,x) \in \reals^{+} \times \reals,
\end{equation}
with initial condition $f(0,x)$. For simplicity of exposition, let's assume in
the discussion below that that $v>0$; the extension to the case $v<0$ is
straightforward. We consider solving
this equation on a uniform mesh of elements, $\Tm_i$, that 
each have width $\Delta x$. We begin by projecting the initial condition
onto the mesh:
\begin{equation}
	F^h(0,\xi) \Bigl|_{\Tm_i} = \sum_{\ell=1}^M  F^{(\ell)}_i(0) \, 
	\varphi^{(\ell)}_{\text{1D}}(\xi),
\end{equation}
where $F^h$ represents the finite dimensional approximation of
$f(t,x)$, $M$ is the desired order of accuracy, and  $\varphi_{\text{1D}}^{(\ell)}(\xi)$ are the
1D   Legendre basis functions:
\begin{equation*}
\begin{split}
\varphi^{(\ell)}_{\text{1D}} = \Biggl\{ &1, \, \, \,  \sqrt{3} \, \xi,
 \, \, \,  \frac{\sqrt{5}}{2} \left( 3 \xi^2 - 1 \right), \, \, \,
\frac{\sqrt{7}}{2} (5 \xi^3 - 3 \xi), \, \, \,
\frac{3}{8} \left( 35 \xi^4  - 30 \xi^2  + 3 \right) \Biggr\}.
\end{split}
\end{equation*}
A simple, high-order accurate,
and unconditionally stable algorithm to update this solution can developed
based on the following two steps:
\begin{enumerate}
\item  Exactly advect the initial condition over a time step $\Delta t$:
\[
    f \left( t+\Delta t, x \right) = f\left(t, x - v \Delta t \right)
\]
\item Project this solution back onto the mesh $\Tm_i$.
\end{enumerate}
This process is illustrated in Figure \ref{fig:shift_proj}.
These two steps can be compactly written for any starting time $t^n$ and
final time $t^{n+1} = t^n + \Delta t$ as follows:
\begin{equation}
\label{eqn:LxW1d}
\begin{split}
	F^{(\ell)}_i \bigl(t^{n+1} \bigr) &= \frac{1}{2} \sum_{k=1}^M F^{(k)}_{i-1-j}(t^n)
	\int_{-1}^{-1 + 2 \nu} \varphi^{(k)}_{\text{1D}}(\xi+2-2\nu)  \, \varphi^{(\ell)}_{\text{1D}}(\xi) \, d\xi  \\
	&+ \frac{1}{2} \sum_{k=1}^M F^{(k)}_{i-j}(t^n)
	\int_{-1+2\nu}^{1}  \varphi^{(k)}_{\text{1D}}(\xi-2\nu)   \, \varphi_{\text{1D}}^{(\ell)}(\xi) \, d\xi,
\end{split}
\end{equation}
where
\begin{align}
j:= \Bigg\lfloor \frac{v \Delta t}{\Delta x} \Bigg\rfloor \qquad
\text{and} \qquad
\nu := \frac{v \Delta t}{\Delta x} - j.
\end{align}
Here $\lfloor \cdot \rfloor$ denotes the {\it floor} operation\footnote{this function
takes a real input and rounds down to the largest integer that is smaller
than or equal to the input.} and  $0 \le \nu \le 1$.
By construction, update \eqref{eqn:LxW1d} is {\it unconditionally stable} independent
of the polynomial order of the spatial discretization.

The integrals in equation \eqref{eqn:LxW1d} can be evaluated exactly.
For example, in the case of piecewise constants and $j=0$, \eqref{eqn:LxW1d} 
is nothing more than the first-order upwind scheme:
\begin{equation}
	F^{(1),n+1}_i  = 
		F^{(1), n}_i  - \nu \left( F^{(1), n}_i
		- F^{(1), n}_{i-1} \right).
\end{equation}
In the case of piecewise linear polynomials and $j=0$, the scheme can be written
as follows:
\begin{align}
\begin{split}
\label{eqn:modLxW1}
F^{(1),n+1}_i &= F^{(1),n}_i - \nu \left( \left[ F^{(1),n}_i+\sqrt{3} \, F^{(2),n}_i \right] -\left[ F^{(1),n}_{i-1}+\sqrt{3} \, F^{(2),n}_{i-1} \right] \right) \\  & \qquad 
\qquad + \sqrt{3} \, \nu^2 \left( F^{(2),n}_i
-F^{(2),n}_{i-1} \right),
\end{split} \\
\begin{split}
\label{eqn:modLxW2}
F^{(2),n+1}_i &= F^{(2),n}_i + \sqrt{3} \,  \nu \left( \left[ F^{(1),n}_i - \sqrt{3} \,  
F^{(2),n}_i \right] -
  \left[  F^{(1),n}_{i-1} +\sqrt{3} \, F^{(2),n}_{i-1} \right] \right) \\ & \hspace{-3mm}
- \sqrt{3} \,  \nu^2 \left( F^{(1),n}_{i}-F^{(1),n}_{i-1}-2 \sqrt{3} \, F^{(2),n}_{i-1} \right) + 2 \nu^3 \,  \left( 
F^{(2),n}_{i}-F^{(2),n}_{i-1} \right).
\end{split}
\end{align}
The above method is a close cousin to the Lax-Wendroff discontinuous Galerkin
scheme (LxW-DG) of Qiu, Dumbser, and Shu \cite{article:Qiu05}. 
In particular, in the case of piecewise linear polynomials the LxW-DG for the
advection equation can be written as
\begin{align}
\begin{split}
\label{eqn:LxW1}
F^{(1),\text{n+1}}_i &= F^{(1),n}_i - \nu \left( \left[ F^{(1),n}_i+\sqrt{3} \, F^{(2),n}_i \right] -\left[ F^{(1),n}_{i-1}+\sqrt{3} \, F^{(2),n}_{i-1} \right] \right) \\  & \qquad 
\qquad + \sqrt{3} \, \nu^2 \left( F^{(2),n}_i
-F^{(2),n}_{i-1} \right),
\end{split} \\
\begin{split}
\label{eqn:LxW2}
F^{(2),\text{n+1}}_i &= F^{(2),n}_i + \sqrt{3} \,  \nu \left( \left[ F^{(1),n}_i - \sqrt{3} \,  
F^{(2),n}_i \right] -
  \left[  F^{(1),n}_{i-1} +\sqrt{3} \, F^{(2),n}_{i-1} \right] \right) \\ &
- 3 \,  \nu^2 \left( F^{(2),n}_{i}-F^{(2),n}_{i-1} \right).
\end{split}
\end{align}
We note that \eqref{eqn:LxW1}--\eqref{eqn:LxW2} and
 \eqref{eqn:modLxW1}--\eqref{eqn:modLxW2} agree
 except in the $\nu^2$ and $\nu^3$ terms in the $F^{(2)}$ update.
We argue below that these additional
terms in the method given by  \eqref{eqn:modLxW1}--\eqref{eqn:modLxW2} 
are crucial in ensuring stability up to CFL number one, and 
their absence in the LxW-DG method cause a non-optimal
stability result.
\bigskip

\noindent {\bf Claim.}
{\it The numerical update given by \eqref{eqn:modLxW1}--\eqref{eqn:modLxW2}
 is stable for $0 \le \nu \le 1$.}

\begin{proof}
We apply von Neumann stability analysis to 
\eqref{eqn:modLxW1}--\eqref{eqn:modLxW2} by assuming the following ansatz:
\[
F^{(\ell),n}_i = F^{(\ell),n} \, e^{I \xi i \Delta x},
\]
where $I = \sqrt{-1}$.
Plugging this into \eqref{eqn:modLxW1}--\eqref{eqn:modLxW2} yields
\[
\begin{bmatrix}
F^{(1)} \\
F^{(2)}
\end{bmatrix}^{n+1}
\hspace{-2mm}
=
\begin{bmatrix}
1+\nu \left( \zeta - 1 \right) &  \sqrt{3} \nu \left(1-\nu \right) \left( \zeta - 1 \right)  \\
\sqrt{3} \nu (\nu-1)(\zeta-1) & 1+2 \nu^3(1-\zeta) + 6 \nu^2 \zeta - 3\nu(\zeta+1)
\end{bmatrix}
\begin{bmatrix}
F^{(1)} \\
F^{(2)}
\end{bmatrix}^n,
\]
where $\zeta = e^{-I \xi \Delta x}$.
With some work, which is omitted here, one can show that the
maximum modulus of the eigenvalues of the amplification matrix
for $0 \le \nu \le 1$ is given by
\[
	g(\nu) = \text{max}\biggl( 1, \Bigl| 1-6 \nu+6 \nu^2 \Bigr| \biggr).
\]
We note that
\[ 
g(\nu) \equiv 1 \quad \forall \, \nu \in [0,1],
\]
which concludes the proof.
\end{proof}

\bigskip

\noindent {\bf Claim.}
{\it The numerical update given by
 \eqref{eqn:LxW1}--\eqref{eqn:LxW2} is stable for $0 \le \nu \le \frac{1}{3}$.}

\begin{proof}
Using the same von Neumann ansatz as in the previous claim,
this time applied to \eqref{eqn:LxW1}--\eqref{eqn:LxW2}, yields 
\[
\begin{bmatrix}
F^{(1)} \\
F^{(2)}
\end{bmatrix}^{n+1}
=
\begin{bmatrix}
1+\nu \left( \zeta - 1 \right) &  \sqrt{3} \nu \left(1-\nu \right) \left( \zeta - 1 \right)  \\
\sqrt{3} \nu (1-\zeta) & 1-3 \nu (\zeta+1) + 3\nu^2 (\zeta-1)
\end{bmatrix}
\begin{bmatrix}
F^{(1)} \\
F^{(2)}
\end{bmatrix}^n.
\]
With some work, which is omitted here, one can show that the
maximum modulus of the eigenvalues of the amplification matrix
for $0 \le \nu \le 1$ is given by
\[
	g_{\text{LxW-DG}}(\nu) = \text{max}\biggl( 1, \Bigl| 1-6 \nu \Bigr| \biggr).
\]
We note that
\[ 
g_{\text{LxW-DG}}(\nu) \equiv 1 \quad \forall \, \nu \in \biggl[0,\frac{1}{3} \biggr],
\quad \text{but} \quad
g_{\text{LxW-DG}}(\nu) > 1 \quad \forall \, \nu \in \biggl(\frac{1}{3}, 1\biggr],
\]
which concludes the proof.
Also note that $g_{\text{LxW-DG}}(\nu)$ and $g(\nu)$ are the same
except for the additional term $6 \nu^2$ in $g(\nu)$. Therefore, we
conclude that this additional
term is crucial in ensuring stability up to CFL number one for the
method given by \eqref{eqn:modLxW1}--\eqref{eqn:modLxW2}.
\end{proof}

In the above discussion we focused on the piecewise linear
DG method. However, more generally, the LxW-DG method
behaves similarly to the standard Runge-Kutta DG methods (RK-DG) \cite{article:CoShu1}
 in that the
maximum allowable CFL number is inversely proportional to the polynomial order
of the spatial discretization.  The modified LxW-DG scheme represented by 
\eqref{eqn:modLxW1}--\eqref{eqn:modLxW2}, and more generally by 
\eqref{eqn:LxW1d}, always has some modified and some
additional terms in the update (e.g., the $\nu^2$ and $\nu^3$ terms
in \eqref{eqn:modLxW2}) that produce a scheme with optimal stability.

\begin{figure}[!t]
\begin{center}
  (a) \includegraphics[height=33mm]{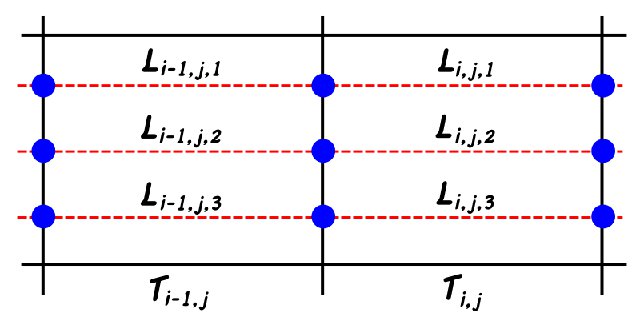} \,
  (b) \includegraphics[height=33mm]{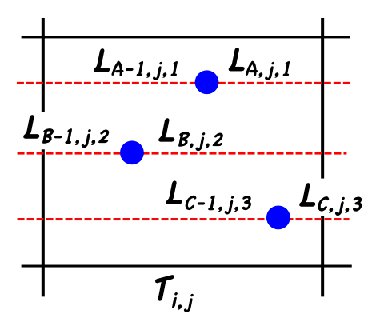}
   \caption{Illustration of the advection algorithm used
   	in the proposed semi-Lagrangian method. Panel (a) illustrates
	a method with 3 Gaussian quadrature lines in the $x$-direction.
	The solution along  each line segment, e.g., $L_{i,j,1}$, in each element, e.g., ${\mathcal T}_{ij}$, is a 1D polynomial.
	Each Gaussian quadrature line has a different velocity, and as
	such, each line will get shifted by a different amount; this is shown in Panel (b).  The subscript labels $A$, $B$, and $C$ in Panel (b) highlight the
	fact that after advection in the $x$-direction, each Gaussian
	quadrature line might come from a different elements. \label{fig:shift_proj_2d}}
	\end{center}
\end{figure}

\subsection{A semi-Lagrangian DG method for Vlasov-Poisson}
\label{sec:vlasov_solver}
In this section, we describe in detail a semi-Lagrangian discontinuous
Galerkin scheme for solving a quasi-1D problem of the form:
\begin{equation}
\label{eqn:quasi1d_adv}
	f_{,t} + a(v) f_{,x} = 0, \quad (t,x,v) \in \reals^{+} \times \reals \times \reals.
\end{equation}
This resulting scheme can then be inserted into Steps 1, 3, and 4
of Algorithm \ref{alg1} for solving the 1+1 dimensional Vlasov-Poisson
system given by \eqref{eqn:VP1d} and \eqref{eqn:VP1dp}. 
We note that in Steps 1 and 4, $a(v)=v$; while
in Step 3 the roles of $x$ and $v$ are reversed and $a(x)=E^{n+\frac{1}{2}}(x)$.

We construct a Cartesian grid over $\Omega = 
[-L, \, L] \times [-V_{\text{max}}, \, V_{\text{max}}]$,
with uniform grid spacing $\Delta x$ and $\Delta v$ in each
coordinate direction. The mesh element ${\mathcal T}_{ij}$ is centered 
at the coordinates
\[
          x_i =  -L  +  \left( i - \frac{1}{2} \right)  \Delta x
          \quad \text{and} \quad
          v_j =  -V_{\text{max}}  +  \left( j - \frac{1}{2} \right) \Delta v.
\]
each element can be mapped to the canonical element via
the simple linear transformation:
\begin{equation}
\label{eqn:canon_vars}
	x = x_i + \xi \, \frac{\Delta x}{2}, \quad 
	v = v_j + \eta \, \frac{\Delta v}{2}.
\end{equation}

Next, we further subdivide each element by introducing
for each horizontal row of elements, $j$, a set of $M$
horizontal lines located at
\begin{equation}
\label{eqn:vsubjk}
       v_{jk} :=  v_j  + \eta_k \, \frac{\Delta v}{2}, \quad
       \text{for} \quad k=1,\ldots,M,
\end{equation}
where $\eta_k$ are the roots of the $M^\text{th}$ degree Legendre
polynomial.  Along each of these lines, we pretend that
we are solving a 1D constant coefficient advection of the form:
\begin{equation}
      f_{,t} + a(v_{jk}) f_{,x} = 0.
\end{equation}
This equation is then essentially solved by the 1D method described 
in the previous section: \S \ref{sec:1dadv}.
This is depicted
in Figure \ref{fig:shift_proj_2d}, where we have chosen $M=3$ for illustration 
purposes\footnote{The parameter $M$ is chosen to coincide with the
spatial  order of accuracy of the scheme. In the numerical examples section we always take $M=5$ to coincide with a fifth-order accurate discretization in
$x$ and $v$.}. 
Finally, a fully 2D solution is reconstructed  in each element,
$\Tm_{ij}$, by summing the $M$ 1D solutions for $k=1,\ldots,M$
with the appropriate  Gaussian quadrature weights.
This algorithm  is summarized in Algorithm \ref{alg2}.

\begin{figure}[!t]
\begin{center}
   \includegraphics[width=120mm]{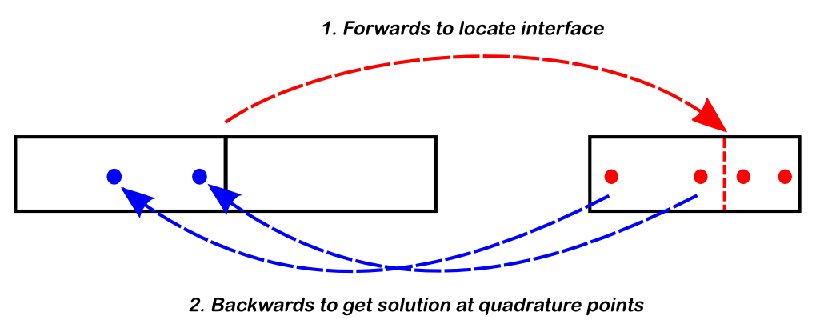}
   \caption{Illustration of the forward and backward nature of the
   proposed semi-Lagrangian scheme. First, the cell edges are
   propagated {\it forward} from their initial time to their final time. 
   Once these locations are known, Gauss-Legendre quadrature
   points are placed between the old cell edges and the new cell
   edges. In order to find solution values at these Gauss-Legendre
   points, we trace {\it backwards} along the characteristics to the
   initial time.\label{fig:forward_backward}}
	\end{center}
\end{figure}

\begin{algorithm}
\caption{The proposed semi-Lagrangian algorithm for solving quasi-1D
	advection equations of the form $f_{,t} + a(v) f_{,x} = 0$.}
\label{alg2}
\begin{algorithmic}
 \STATE 0. Start with the current solution:
 \begin{equation}
   f^h(t^n,x,v) \biggl|_{\Tm_{ij}} := \sum_{\ell=1}^{M(M+1)/2}
   	F^{(\ell)}_{ij} \varphi^{(\ell)}(\xi,\eta),
\end{equation}
where $M$ is the desired order of accuracy in $x$ and $v$. The
canonical variables $(\xi,\eta)$ are linearly related to the variables
$(x,v)$ via \eqref{eqn:canon_vars}.
\STATE 1. In each element, construct $M$ horizontal lines
given by \eqref{eqn:vsubjk}, where $\eta_k$ are the $M$ Gauss-Legendre
quadrature points.  The solution in element $\Tm_{ij}$
along each one of these lines is
\begin{equation}
   \sum_{\ell=1}^{M(M+1)/2}
   	F^{(\ell)}_{ij} \varphi^{(\ell)}(\xi,\eta_k).
\end{equation}
\STATE 2. Advect the cell interfaces forward in time through
1D advection along each $v_{jk}$. After forward advection
along $v_{jk}$,  the $i^{\text{th}}$ cell will contain the {\bf old} interface
$i-\half-I_{jk}$ and this interface will be located a distance
$\Delta x \, \nu_{jk}$ from the {\bf new} interface $i-\frac{1}{2}$,
where
 \begin{align}
 \label{eqn:Ijk_and_nujk}
I_{jk}:= \Bigg\lfloor \frac{a(v_{jk}) \Delta t}{\Delta x} \Bigg\rfloor
\qquad \text{and} \qquad \nu_{jk} := \frac{a(v_{jk}) \Delta t}{\Delta x} - I_{jk}.
\end{align}
\STATE  This is the forwards phase illustrated in
Figure \ref{fig:forward_backward}.
\STATE
\STATE 3. Next we trace characteristics backwards in time 
for each $v_{jk}$. The resulting process yields:
\begin{equation}
\begin{split}
\hspace{-5mm}
S_{ijk}^{(\ell)} &:= \frac{1}{2} \sum_{m=1}^{M(M+1)/2} F_{i-1-I_{jk} \, j}^{(m)}
	\int_{-1}^{-1 + 2 \nu_{jk}} \hspace{-2mm} \varphi^{(m)}(\xi+2-2\nu_{jk},\eta_k)  \, \varphi^{(\ell)}(\xi,\eta_k) \, d\xi  \\
	& \, \, + \frac{1}{2} \sum_{m=1}^{M(M+1)/2} F_{i-I_{jk} \, j}^{(m)}
	\int_{-1+2\nu_{jk}}^{1}  \varphi^{(m)}(\xi-2\nu_{jk},\eta_k)   \, \varphi^{(\ell)}(\xi,\eta_k) \, d\xi,
\end{split}
\end{equation}
\STATE where each of the above integrals are evaluated using 1D Gauss-Legendre
quadrature rules with $M$ points. 
This step in the algorithm is the backwards phase illustrated in
Figure \ref{fig:forward_backward}.
\STATE
\STATE 4. Finally, we update the solution by integrating in the vertical
	direction:
\begin{equation}
   F^{(\ell),\text{new}}_{ij}  =  \sum_{k=1}^{M} \omega_k 
   	\, S_{ijk}^{(\ell)},
\end{equation}
where $\omega_k$ are the usual Gauss-Legendre quadrature weights
for the $M$  Gauss-Legendre quadrature points $\eta_k$.
\end{algorithmic}
\end{algorithm}

In the semi-Lagrangian way of thinking there are generally two
philosophies: 
\begin{enumerate}
\item {\it Forward evolution:} Each ``particle''\footnote{A ``particle'' can be a quadrature point, element interface, etc$\ldots$ depending on the
details of the particular method.} is advected
forward in time and then a solution at the the future time is 
reconstructed (e.g., Particle-in-cell \cite{hockney1988computer, birdsall2004plasma} and Crouseilles et
al. \cite{article:CrReSo09}).
\item {\it Backward evolution:} The solution at a specific location
in the future is determined by tracing backwards along
characteristics (e.g., Restelli et al. \cite{article:ReBoSa06}).
\end{enumerate}
We interpret the semi-Lagrangian DG method described in Algorithm \ref{alg2} 
as a sort of mixed forward and backward method:
\begin{enumerate}
\item {\it Forward evolution phase:} The cell edges are
   propagated {\it forward} from their initial time to their final time.  This
   process determines the set of quadrature points necessary for the projection step.
   This forward evolution step is needed in order to attain mass
   conservation.
\item {\it Backward evolution phase:}
   Once the old cell edge locations are known at the new time,
   Gauss-Legendre quadrature
   points are placed between the old cell edges and the new cell
   edges. In order to find solution values at these Gauss-Legendre
   points, we trace {\it backwards} along the characteristics to the
   initial time.
\end{enumerate}
This process is illustrated in Figure \ref{fig:forward_backward}.

\subsection{Poisson solver}
\label{sec:poisson}
We describe in this section how to efficiently solve the
Poisson equation in Step 2 of the operator splitting approach
shown in Algorithm \ref{alg1}.
Consider first the 1D Poisson equation on $x \in [a,b]$
with mixed boundary conditions:
\begin{equation}
	\label{eqn:poisson_1d}
   \phi_{,xx} = \rho(x)-\rho_0, \quad \phi_{,x}(a) = \gamma, \quad \phi(b) = \beta.
\end{equation}
We apply to the Poisson equation the so-called local discontinuous Galerkin method (LDG) (see \cite{article:ABCM02,book:HesWar07} for two reviews of various approaches for solving Poisson equations via the DG method),
and rewrite it as a system of two equations:
\begin{align}
\label{eqn:poisson1}	 
   E_{,x} &= \rho(x)-\rho_0, \\
\label{eqn:poisson2}
	\phi_{,x} &= E(x). 
\end{align}
We expand $\phi(x)$, $E(x)$, and $\rho(x)$ on each element as follows:
\begin{equation}
   \Bigl\{ \phi^{h}(x), \,
   E^{h}(x), \, \rho^{h}(x) - \rho_0 \Bigr\} \biggl|_{{\mathcal T}_{i}}
   = \sum_{k=1}^{M} \left\{ \Phi^{(k)}_i, \, {\mathbb E}^{(k)}_i, \, {\mathbb P}^{(k)}_i \right\}
    \, \varphi^{(k)}_{\text{1D}}(\xi),
\end{equation}
where  $M$ is the desired order of accuracy.

We multiply (\ref{eqn:poisson1}) and (\ref{eqn:poisson2})
each by $\varphi^{(\ell)}_{\text{1D}}(\xi)$  and integrate from $\xi=-1$ to $\xi=1$:
\begin{align}	
  \label{eqn:weak_poisson1}
    \frac{1}{\Delta x} \left[  \varphi^{(\ell)}_{\text{1D}} \, E^{h} \right]_{-1}^{1}
     - \frac{1}{\Delta x} \int_{-1}^{1}   \varphi^{(\ell)}_{{\text{1D}},\xi} \, 
  E^{h} \, d\xi  &= {\mathbb P}^{(\ell)}_i , \\
  \label{eqn:weak_poisson2}
   \frac{1}{\Delta x} \left[  \varphi^{(\ell)}_{\text{1D}} \, \phi^{h} \right]_{-1}^{1}
  - \frac{1}{\Delta x} \int_{-1}^1   \varphi^{(\ell)}_{{\text{1D}} ,\xi} \, 
  \phi^{h} \, d\xi &= {\mathbb E}^{(\ell)}_i.
\end{align}
Next, we apply the following {\it one-sided} rules in order to evaluate
$\phi^{h}$ and $E^{h}$ at the grid interfaces:
\begin{align}
	\phi^{h} (-1)  & := 
	\sum_{k=1}^{M} \, \varphi^{(k)}_{\text{1D}} (-1) \, \Phi^{(k)}_i
	= \sum_{k=1}^{M} (-1)^{k+1} \, \sqrt{2k-1} \, \, \Phi^{(k)}_i, \\
	\phi^{h} (1)  & := 
	\sum_{k=1}^{M} \, \varphi^{(k)}_{\text{1D}} (-1) \, \Phi^{(k)}_{i+1}
	= \sum_{k=1}^{M} (-1)^{k+1} \, \sqrt{2k-1} \, \, \Phi^{(k)}_{i+1}, \\
	E^{h} (-1)  & := 
	\sum_{k=1}^{M} \, \varphi^{(k)}_{\text{1D}} (1)  \, {\mathbb E}^{(k)}_{i-1}
	= \sum_{k=1}^{M} \sqrt{2k-1} \, \, {\mathbb E}^{(k)}_{i-1}, \\
	E^{h} (1)  & := 
	\sum_{k=1}^{M} \, \varphi^{(k)}_{\text{1D}} (1)  \, {\mathbb E}^{(k)}_{i}
	= \sum_{k=1}^{M} \sqrt{2k-1} \, \, {\mathbb E}^{(k)}_{i}.
\end{align}
Using these definitions, (\ref{eqn:weak_poisson1})
and (\ref{eqn:weak_poisson2}) can be rewritten as follows:
\begin{align}
\label{eqn:discrete_poisson1}
   \sum_{k=1}^{M}  \sqrt{2k-1} \, \sqrt{2\ell-1} \,
    \left( {\mathbb E}^{(k)}_{i} + (-1)^{\ell} \, {\mathbb E}^{(k)}_{i-1} \right) 
    -  S_{\ell k} \, {\mathbb E}^{(k)}_i 
     &=  \Delta x \, {\mathbb P}_i^{(\ell)}, \\
     \label{eqn:discrete_poisson2}
  \sum_{k=1}^{M} (-1)^{k+1} \, \sqrt{2k-1} \, \sqrt{2\ell-1} \,
    \left( \Phi^{(k)}_{i+1} + (-1)^{\ell} \, \Phi^{(k)}_{i} \right) 
    - S_{\ell k} \, \Phi^{(k)}_i
      &=  \Delta x \, {\mathbb E}_i^{(\ell)},
\end{align}
where $S$ is an $M \times M$ matrix with entries given by
\begin{equation}
   S_{\ell k} =  \int_{-1}^{1} 
     \varphi^{(\ell)}_{{\text{1D}}, \xi} \, \varphi^{(k)}_{\text{1D}}  \, d\xi.
\end{equation}
Note that the boundary conditions in (\ref{eqn:poisson_1d})
imply that
\begin{gather}
   {\mathbb E}^{h}(a) = \gamma \quad \Longrightarrow \quad
   \sum_{k=1}^{M} \sqrt{2k-1} \, {\mathbb E}_{0}^{(k)} = \gamma, \\
  \phi^{h}(b) = \beta \quad \Longrightarrow \quad
   \sum_{k=1}^{M} (-1)^{k+1} \sqrt{2k-1} \, \Phi_{m_x+1}^{(k)} = \beta,
\end{gather}
where $m_x$ is the number of grid elements.

Putting everything together,
 (\ref{eqn:discrete_poisson1}) and (\ref{eqn:discrete_poisson2}) can be written in
 matrix form:
\begin{align}
\label{eqn:poisson_dg1}
\frac{1}{\Delta x} \begin{bmatrix}
A &    &           &             &   \\ 
B & A &           &             &     \\
   & B & A        &             &     \\
   &    & \ddots & \ddots  &     \\
   &    &         	 & B          & A 
   \end{bmatrix}
   \begin{bmatrix}
   	\vec{\mathbb E}_1 \\ \vec{\mathbb E}_2 \\ \vec{\mathbb E}_3 \\ \vdots \\ \vec{\mathbb E}_{m_x}
   \end{bmatrix} &= 
   \begin{bmatrix}
   	 \vec{\mathbb P}_1 - 
	(-1)^{\ell} \, \sqrt{2\ell - 1} \, \gamma \left( \Delta x \right)^{-1}
	\\  \vec{\mathbb P}_2 \\  \vec{\mathbb P}_3
	 \\ \vdots \\ \, \vec{\mathbb P}_{m_x}
   \end{bmatrix}, \\
   \label{eqn:poisson_dg2}
 \frac{1}{\Delta x}   \begin{bmatrix}
C & D &          &             &   \\ 
   & C & D       &             &     \\
   &     & C       &  \ddots &     \\
   &    &            & \ddots & D       \\
   &    &            &        & C 
   \end{bmatrix}
   \begin{bmatrix}
   	\vec{\Phi}_1 \\ \vec{\Phi}_2 \\ \vec{\Phi}_3 \\ \vdots \\ \vec{\Phi}_{m_x}
   \end{bmatrix} &= 
   \begin{bmatrix}
   	 \vec{\mathbb E}_1 \\  \vec{\mathbb E}_2 \\ \vec{\mathbb E}_3
	 \\ \vdots \\  \vec{\mathbb E}_{m_x} - \sqrt{2\ell - 1} \, \beta \,  \left( \Delta x \right)^{-1}
   \end{bmatrix},
\end{align}
where, for example,
\begin{gather}
	\vec{\mathbb E}_i = \left( {\mathbb E}^{(1)}_i, \, \ldots , \, {\mathbb E}^{(M)}_i \right)^{T}, 
\end{gather}
and $A$, $B$, $C$, and $D$ are $M \times M$ matrices
with entries given by:
\begin{align}
  A_{\ell k} &= \sqrt{2k-1} \sqrt{2\ell-1} - S_{\ell k}, \\
  B_{\ell k} &= (-1)^{\ell} \sqrt{2k-1} \sqrt{2\ell-1}, \\
  C_{\ell k} &= (-1)^{k+\ell+1} \sqrt{2k-1} \sqrt{2\ell-1} - S_{\ell k}, \\
  D_{\ell k} &= (-1)^{k+1} \sqrt{2k-1} \sqrt{2\ell-1}.
\end{align}
The advantage of this formulation is that equations \eqref{eqn:poisson_dg1}
and  \eqref{eqn:poisson_dg2} are already in lower and upper triangular forms,
respectively, and therefore can be easily be solved. The matrices $A$ and $C$ 
can be easily inverted once at the beginning of the calculation.

\subsubsection{Dirichlet boundary conditions}
The above method can easily be adapted to handle Dirichlet boundary conditions:
\begin{equation}
	\label{eqn:poisson_dirichlet}
   \phi(a) = \alpha, \quad \phi(b) = \beta,
\end{equation}
by noting that these boundary conditions 
are equivalent to the mixed BCs in  (\ref{eqn:poisson_1d}) if
we carefully choose the parameter $\gamma$ in (\ref{eqn:poisson_1d}). It can be shown that the correct
choice for $\gamma$ is given by
\begin{equation}
\begin{split}
    \gamma &= \frac{\beta - \alpha}{b-a} + 
    			\frac{1}{b-a} \int_{a}^{b} \left( s - b \right) \Bigl( \rho(s) - \rho_0 \Bigr) \, ds \\
			&= \frac{\beta - \alpha}{b-a} + 
			   			\frac{\Delta x}{b-a} \sum_{i=1}^{m_x} \left(  x_i - b \right) {\mathbb P}^{(1)}_i
			+ \frac{\Delta x^2}{2 \sqrt{3} \left(b - a\right)} \sum_{i=1}^{m_x} {\mathbb P}^{(2)}_i,
\end{split}
\end{equation}
where $m_x$ is the number of grid elements.

\subsubsection{Periodic boundary conditions}
Periodic boundary conditions can also be readily handled:
\begin{equation}
	\label{eqn:poisson_periodic}
   \phi(a) = \phi(b), \quad E(a) = E(b),
\end{equation}
by again noting that we need to carefully choose
 $\beta$ and $\gamma$ in (\ref{eqn:poisson_1d}). It can be shown that the correct
choice for $\gamma$ is given by
\begin{equation}
   \label{eqn:gamma_periodic}
    \gamma =   \frac{1}{b-a} \int_{a}^{b} s \Bigl( \rho(s) - \rho_0 \Bigr) \, ds
    = \frac{\Delta x}{b-a} \sum_{i=1}^{m_x} x_i {\mathbb P}^{(1)}_i
			+ \frac{\Delta x^2}{2 \sqrt{3} \left(b-a \right)} \sum_{i=1}^{m_x} {\mathbb P}^{(2)}_i,
\end{equation}
and $\beta$ is arbitrary. Without loss of generality we simply take $\beta = 0$.
By solving (\ref{eqn:poisson_1d}) with $\gamma$ given by
(\ref{eqn:gamma_periodic}) and with $\beta=0$, we obtain a solution $\phi(x)$ with the property that
\begin{equation}
   \phi(a) = \phi(b) = 0.
\end{equation}

\subsection{High-order split semi-Lagrangian method}
\label{sec:higherordersplit}
Now that the basic pieces are in place (i.e., a semi-Lagrangian
solver for each split-piece \S\ref{sec:vlasov_solver} and the Poisson-solver \S\ref{sec:poisson}),
we are ready to introduce a fully fourth-order accurate method
for the Vlasov-Poisson system. We describe in this section some important implementation
 details needed to achieve this. The resulting method is
 summarized in Algorithm \ref{alg3}.

\subsubsection{Fourth-order operator splitting}
Consider a time-dependent problem where the right-hand
side is written as the sum of two differential operators ${\mathcal A}$ and ${\mathcal B}$:
\[
	q_{,t} = {\mathcal A}(q) + {\mathcal B}(q).
\]
A fourth-order accurate operator splitting technique  for such systems  was
developed by Forest and Ruth \cite{article:FoRu90} 
and by Yoshida \cite{article:Yoshida90,article:Yoshida93}.
If we define the following two constants:
\begin{align}
   \gamma_1 &=  \frac{1}{2 - 2^{1/3}} \approx 1.351207191959658, \\
   \gamma_2 &= -\frac{2^{1/3}}{2 - 2^{1/3}} \approx -1.702414383919315,
\end{align}
then the fourth-order splitting approach of 
\cite{article:FoRu90,article:Yoshida90,article:Yoshida93} 
can be written as a composition of the following seven stages:
\begin{align*}
\text{\bf Stage 1:} & \quad \frac{\gamma_1  \Delta t }{2} \approx 0.6756 \, \Delta t 
\quad \text{step on} \quad 
q_{,t} =  {\mathcal A}\left(  q \right), \\
\text{\bf Stage 2:} & \quad   {\gamma_1  \Delta t }  \approx
		 1.3512 \, \Delta t \quad \text{step on} \quad
		 q_{,t} = {\mathcal B}(q), \\
\text{\bf Stage 3:} & \quad \frac{(\gamma_1 + \gamma_2) \Delta t }{2} \approx
		 -0.1756 \, \Delta t \quad \text{step on} \quad
		 q_{,t} = {\mathcal A}(q),  \\
\text{\bf Stage 4:} & \quad  \gamma_2 \Delta t \approx
		-1.7024 \, \Delta t \quad \text{step on} \quad q_{,t} = {\mathcal B}(q), \\
\text{\bf Stage 5:} &  \quad \frac{(\gamma_1 + \gamma_2) \Delta t }{2} \approx
		 -0.1756 \, \Delta t \quad \text{step on} \quad
		 q_{,t} = {\mathcal A}(q),  \\
\text{\bf Stage 6:} & \quad   {\gamma_1  \Delta t }  \approx
		 1.3512 \, \Delta t \quad \text{step on} \quad
		 q_{,t} = {\mathcal B}(q), \\
\text{\bf Stage 7:} & \quad \frac{\gamma_1  \Delta t }{2} \approx 0.6756 \, \Delta t 
\quad \text{step on} \quad 
q_{,t} =  {\mathcal A}\left(  q \right).
\end{align*}
We note that this splitting approach requires some steps larger than $\Delta t$: 
{\bf Step 2} and {\bf Step 6}; as well as backward steps:
 {\bf Step 3},  {\bf Step 4}, and  {\bf Step 5}.

 \subsubsection{Application to Vlasov-Poisson}
 \label{sec:high_order_semiL_alg}
One difficulty with raising the temporal order of accuracy from
two to four is the time-dependence
of the electric field. In other words, the Cheng and Knorr
\cite{article:ChKn76} method does not completely reduce the
Vlasov-Poisson to two constant coefficient problems, since
the electric field remains time-dependent. In the case of 
Strang splitting, it turned out that one could easily generate
a second order accurate representation of the electric field
at the half time step, $t^n + \frac{1}{2} \Delta t$, as required in Step 3 of
Algorithm \ref{alg1}, simply by carrying out Steps 1 and 2 of 
Algorithm \ref{alg1}. Additional attention must be paid
in order to obtain temporally fourth-order accurate
representations of the electric field.
  
In order to avoid having to use the electric field at different
points in time (i.e., a multi-step method), we construct
the fourth-order Taylor polynomial centered at $t=t^n$:
\begin{equation}
\bar{\E}(t,{\bf x}) := \E^n +  \left( t - t^n \right) \E^n_{,t}
+ \frac{1}{2} \left( t - t^n \right)^2 \E^n_{,tt}
+ \frac{1}{6} \left( t - t^n \right)^3 \E^n_{,ttt}.
\end{equation}
The electric field value $\E^n$ is computed from
the Poisson equation:
\begin{equation}
  \nabla \cdot \E^n = \nabla^2 \phi^n = \rho^n - \rho_0.
\end{equation}
The first time derivative of the electric field is
proportional to the momentum:
\begin{align}
\label{eqn:Efield_time_derivative}
    \nabla \cdot \E_{,t} = \rho_{,t} = -\nabla \cdot \left(
   	\rho {\bf u} \right) \quad \Longrightarrow \quad
	  \E^n_{,t} = - \left( \rho {\bf u} \right)^n,
\end{align}
and is thus readily computable. 
In order to compute the remaining time derivatives,
we write down the evolution equations for the
first few moments of the Vlasov-Poisson equation:
\begin{align}
	\rho_{,t} + \nabla \cdot \left( \rho {\bf u} \right) &= 0, \\
	\left( \rho {\bf u} \right)_{,t} + \nabla \cdot {\mathbb E} &= \rho \E,  \\
	{\mathbb E}_{,t} + \nabla \cdot {\mathbb F} &= 
	\rho \left( {\bf u} \E + \E {\bf u} \right),
\end{align}
where
\begin{equation*}
  \rho := \int_{\bf v} f \, d{\bf v}, \quad
  \rho {\bf u}  := \int_{\bf v} {\bf v}  f \, d{\bf v}, \quad
  {\mathbb E}  := \int_{\bf v} {\bf v} {\bf v}  f \, d{\bf v}, \quad \text{and} \quad
  {\mathbb F}  := \int_{\bf v} {\bf v} {\bf v} {\bf v}  f \, d{\bf v}.
\end{equation*}
Using equation \eqref{eqn:Efield_time_derivative} and
the above moment evolution equations, we can compute
the second and third time derivatives of the electric field entirely in
terms of spatial derivatives:
\begin{align}
\label{eqn:Ett}
\E_{,tt} &= - \left( \rho {\bf u} \right)_{,t} = 
\nabla \cdot {\mathbb E} - \rho \, \E, \\
\begin{split}
\E_{,ttt} &=  
\nabla \cdot {\mathbb E}_{,t} - \rho_{,t} \, \E - \rho \, \E_{,t} \\
\label{eqn:Ettt}
&= \nabla \cdot \left( \rho
{\bf u} \E +  \rho  \E {\bf u}  \right) - \nabla \cdot \nabla \cdot {\mathbb F} 
+ \E \, \nabla \cdot \left( \rho {\bf u} \right)
+ \rho^2 {\bf u}.
\end{split}
\end{align}

It is clear from these expressions that 
in order to compute $\E_{,tt}$ and $\E_{,ttt}$, we
need to be able to compute first and second derivatives in
space. One approach for doing this in the discontinuous 
Galerkin framework is to multiply by a test function and then
integrate-by-parts. However, this approach will in general lead to a loss
of accuracy. Instead, the approach taken in this work is to apply
central finite differences that work directly on the Legendre coefficients of
the function that needs to be differentiated. 

Consider the $L_2$-projection of the function $f(x)$, where $f:\reals \rightarrow \reals$,
onto the space of piecewise polynomials of degree four on 
a uniform mesh of elements, ${\mathcal T}_i$, that each have
width $\Delta x$:
\begin{equation}
	f^h \Bigl|_{\Tm_i} = \sum_{\ell=1}^5  F^{(\ell)}_i \, 
	\varphi^{(\ell)}_{\text{1D}}(\xi).
\end{equation}
Therefore, $f^h$ represents the finite dimensional approximation of
$f(x)$. We can approximate the first and second derivatives
of $f(x)$ to ${\mathcal O}\left(\Delta x^5\right)$ accuracy
by computing appropriate central finite differences
of the Legendre cofficients $F^{(\ell)}$. If we let
$D_x f^h$ and $D_{xx} f^h$ represent the
finite dimensional approximations of $f'(x)$ and
$f''(x)$, respectively, then the central
finite difference formulas  on the Legendre coefficients are
\begin{align}
\label{eqn:finite_diff_1}
 \begin{bmatrix}
  	D_{x} F^{(1)}_i \\ D_{x} F^{(2)}_i \\ D_{x} F^{(3)}_i \\
	   D_{x} F^{(4)}_i \\ D_{x} F^{(5)}_i
  \end{bmatrix} &= 
  \frac{1}{2 \Delta x} 
    \begin{bmatrix}
  	\Delta_1 F^{(1)}_i - 2 \sqrt{5} \, \Delta_1 F^{(3)}_i + 78 \, \Delta_1 F^{(5)}_i \\ 
	 \Delta_1 F^{(2)}_i - \frac{10}{3} \sqrt{3} \sqrt{7} \, \Delta_1 F^{(4)}_i \\
	 	 	\Delta_1 F^{(3)}_i -14 \sqrt{5} \,  \Delta_1  F^{(5)}_i \\
	 \Delta_1 F^{(4)}_i  \\ 
	 	 	\Delta_1 F^{(5)}_i  
  \end{bmatrix}, \\
  \label{eqn:finite_diff_2}
  \begin{bmatrix}
  	D_{xx} F^{(1)}_i \\ D_{xx} F^{(2)}_i \\ D_{xx} F^{(3)}_i \\
	   D_{xx} F^{(4)}_i \\ D_{xx} F^{(5)}_i
  \end{bmatrix} &= 
  \frac{1}{\Delta x^2}
\begin{bmatrix}
  	\Delta_2 F^{(1)}_i -  \sqrt{5} \, \Delta_2 F^{(3)}_i + 11 \, \Delta_2 F^{(5)}_i \\ 
	 \Delta_2 F^{(2)}_i - \frac{5}{3} \sqrt{3} \sqrt{7} \, \Delta_2 F^{(4)}_i \\
	 	\Delta_2 F^{(3)}_i -7 \sqrt{5} \,  \Delta_2  F^{(5)}_i \\
	 \Delta_2 F^{(4)}_i  \\ 
	 	\Delta_2 F^{(5)}_i
  \end{bmatrix},
\end{align}
where 
\begin{align}
\Delta_1 F^{(k)}_i &:=  F^{(k)}_{i+1} - F^{(k)}_{i-1}, \\
\Delta_2 F^{(k)}_i &:=  F^{(k)}_{i+1} - 2 F^{(k)}_i + F^{(k)}_{i-1}.
\end{align}
To the best of our knowledge, this is the first
time such formulas have been written down
in the context of discontinuous Galerkin methods.
In Table \ref{table:fd_verify} we verify the order
of accuracy by computing the first and second derivatives
of $f(x) = e^{\sin(2 \pi x)}$. The errors
in this table are computed using the relative $L_2$ errors defined by equation
\eqref{eqn:L2error_1D} with $M=5$ and varying $\Delta x$.
See \ref{sec:L2error_1D} for more details.

\begin{table}
\begin{center}
\begin{Large}
  \begin{tabular}{|c||c|c||c|c|}
  \hline
  {\normalsize {\bf Mesh}} & {\normalsize \bf $f'(x)$ error} 
  & {\normalsize\text{$\log_2${\bf (Ratio)}}} & {\normalsize \bf $f''(x)$ error} 
  & {\normalsize\text{$\log_2${\bf (Ratio)}}} \\
    \hline \hline
{\normalsize $25$}   &   {\normalsize $1.747\times 10^{-4}$}   & {\normalsize --} &  {\normalsize $8.292\times 10^{-5}$} & {\normalsize --}  \\
\hline
{\normalsize $50$}   &   {\normalsize $5.543\times 10^{-6}$}   & {\normalsize 4.98} &   {\normalsize $2.672\times 10^{-6}$}  & {\normalsize 4.96}  \\
\hline
{\normalsize $100$}   &   {\normalsize $1.738\times 10^{-7}$}   &  {\normalsize 5.00} & {\normalsize $8.413\times 10^{-8}$}  & {\normalsize 4.99} \\
\hline
{\normalsize $200$}   &   {\normalsize $5.437\times 10^{-9}$}   &  {\normalsize 5.00} & {\normalsize $2.634\times 10^{-9}$}  & {\normalsize 5.00}  \\
\hline
{\normalsize $400$}   &   {\normalsize $1.699\times 10^{-10}$}   &  {\normalsize 5.00} & {\normalsize $8.364\times 10^{-11}$}  & {\normalsize 4.98}  \\
 \hline
  \end{tabular}
  \end{Large}
  \caption{Relative $L_2$-norm errors for computing $f'(x)$ and $f''(x)$ 
  using the Legendre coefficient finite difference formulas 
  \eqref{eqn:finite_diff_1} and \eqref{eqn:finite_diff_2}, respectively.
  The example shown here is for
  	$f(x)=e^{\sin(2\pi x)}$ on a uniform mesh on $0 \le x \le 1$.
   Periodic boundary conditions are imposed to compute the derivative
   in the first and last elements: $F^{(k)}_0 = F^{(k)}_{M}$ and $F^{(k)}_{M+1} = F^{(k)}_{1}$
   for each $k=1,2,3,4,5$.
   \label{table:fd_verify}}
 \end{center}
\end{table}

Once we have constructed an approximation to the 
time-dependent electric field, we are faced with an
advection equation with time-dependent coefficients:
\begin{equation}
    f_{,t}  + \bar{\E}(t,{\bf x}) \cdot f_{,\v} = 0.
 \end{equation}
This equation can be readily solved to high-order via the
method of characteristics. The key step in this approach
is the evolution of the coordinates $\v$ as function of time:
 \begin{gather}
    \frac{d\v}{dt} = \bar{\E}(t,{\bf x}) \quad \Longrightarrow \quad
    	\v(t^n + \Delta t) = \v(t^n) + \int_{t^n}^{t^n + \Delta t} \bar{\E}(t,{\bf x}) \, dt, \\
	\label{eqn:time_dep_move}
  \v(t^n + \Delta t) = \v(t^n) + \Delta t \, \E^n
  + \frac{\Delta t^2}{2} \, \E_{,t}^n
  +  \frac{\Delta t^3}{6} \, \E_{,tt}^n
  +  \frac{\Delta t^4}{24} \, \E_{,ttt}^n.
 \end{gather}
 In other words, the semi-Lagrangian DG method as outlined
 in \S \ref{sec:vlasov_solver}, remains largely unaltered by the
 fact that the electric is time dependent. The only difference is that
 interfaces and quadrature points are transported by equation 
 \eqref{eqn:time_dep_move} instead of the simpler version of
 this equation when $\bar{\E}$ is constant in time.
 
We note than an important advantage of
this approach is that, just as with Strang splitting,  it requires
only a single Poisson solver per time step.
 Finally, we summarize the complete fourth-order splitting method
 for the Vlasov-Poisson system in Algorithm \ref{alg3}. 
 
  \begin{algorithm}
\caption{Fourth-order operator split algorithm.}
\label{alg3}
\begin{algorithmic}
 \STATE 1.   Solve \quad $\nabla^2 \phi = \rho^{n} -  \rho_0$ \, \text{and} \,
compute \quad ${\bf E}^{n} = \nabla \phi$.
\STATE
\STATE 2. Compute 
\begin{align*} 
{\bf E}^{n}_{,t}  &= - \left( \rho {\bf u} \right)^n, \\
{\bf E}^{n}_{,tt}  &= \nabla \cdot {\mathbb E}^n - \left( \rho \E \right)^n, \\
{\bf E}^{n}_{,ttt}  &= \nabla \cdot \left( \rho
{\bf u} \E +  \rho  \E {\bf u}  \right)^n - \nabla \cdot \nabla \cdot {\mathbb F}^n
+ \E^n \, \nabla \cdot \left( \rho {\bf u} \right)^n
+ \left( \rho^2 {\bf u} \right)^n,
\end{align*}
\STATE \hspace{4mm} where the spatial derivatives are computed via
 \eqref{eqn:finite_diff_1} and \eqref{eqn:finite_diff_2}.
\STATE  \hspace{4mm} Then construct 
\[  
     \bar{\E}(t,{\bf x}) := \E^n+  (t-t^n) \, \E^n_{,t}
	+  \frac{1}{2}  (t-t^n)^2 \, \E^n_{,tt} + \frac{1}{6}  (t-t^n)^3 \, \E^n_{,ttt}.
\]
\STATE
  \STATE 3. $\frac{\gamma_1}{2} \Delta t$\quad step on \quad $f_{,t} + {\bf v} \cdot f_{,{\bf x}} = 0$.
 \STATE
   \STATE 4. $\gamma_1 \Delta t$\quad step on \quad $f_{,t} + \bar{\bf E}\left(t, {\bf x} \right) \cdot f_{,{\bf v}} = 0$, \quad
   $t\in t^n + \Bigl[0, \, \gamma_1 \Delta t \Bigr]$.
   \STATE
   \STATE 5. $\frac{(\gamma_1 + \gamma_2)}{2}  \Delta t$  \quad step on \quad $f_{,t} + {\bf v} \cdot f_{,{\bf x}} = 0$.
    \STATE
   \STATE 6. $\gamma_2 \Delta t$ \quad step on \quad $f_{,t} + \bar{\bf E}\left( t, {\bf x} \right) \cdot f_{,{\bf v}} = 0$, \quad $t\in t^n + \Bigl[ \gamma_1 \Delta t, \, (\gamma_1+\gamma_2)
   	 \Delta t \Bigr]$.
   \STATE
   \STATE 7. $\frac{(\gamma_1 + \gamma_2)}{2}  \Delta t$  \quad step on \quad $f_{,t} + {\bf v} \cdot f_{,{\bf x}} = 0$.
 \STATE
   \STATE 8. $\gamma_1 \Delta t$\quad step on \quad $f_{,t} + \bar{\bf E}\left(t, {\bf x}
   	\right) \cdot f_{,{\bf v}} = 0$, \quad $t\in t^n + \Bigl[ (\gamma_1 + \gamma_2) \Delta t, \, 
	 (2\gamma_1 + \gamma_2) \Delta t \Bigr]$.
   \STATE
   \STATE 9. $\frac{\gamma_1}{2} \Delta t$\quad step on \quad $f_{,t} + {\bf v} \cdot f_{,{\bf x}} = 0$.
\end{algorithmic}
\end{algorithm}

\subsection{Mass conservation and positivity in the mean}
\label{sec:cons_mass}
The description of the method so far has yielded
a fourth-order accurate in time and fifth-order in
space semi-Lagrangian method for the Vlasov-Poisson
equations. It still remains to show that the method is
mass conservative and that it is positivity-preserving. 
We prove mass conservation in the first theorem below.
This is followed by a proof that each step in the 
operator split semi-Lagrangian produces a solution that it
is positive in the mean\footnote{We show in the next subsection
how to turn positivity in the mean into global positivity.}.

\begin{thm}[Conservation] Each step of the semi-Lagrangian method described
above is mass conservative.
\end{thm}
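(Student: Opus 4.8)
The plan is to reduce the claim to a single key identity: that the ``shift $+$ project'' update of the basic 1D solver conserves the cell-averaged mass (i.e., the $\varphi^{(1)}$ Legendre coefficient summed over all elements), and then to lift this through the Gaussian-quadrature-line construction of Algorithm~\ref{alg2} and through the stages of Algorithm~\ref{alg3}. The total mass on the mesh is $\sum_{ij} F^{(1)}_{ij}$ up to a fixed constant (since $\varphi^{(1)}\equiv 1$ and the Legendre basis is orthonormal, $\int_{\Tm_i}\int_{\Tm_j} f^h \,dv\,dx = \Delta x\,\Delta v\, F^{(1)}_{ij}$), so it suffices to show each step preserves $\sum_{ij} F^{(1)}_{ij}$.

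First I would analyze the scalar 1D update \eqref{eqn:LxW1d}. Taking $\ell=1$ and using $\varphi^{(1)}_{\text{1D}}\equiv 1$, the two integrals collapse to $\tfrac12\int_{-1}^{-1+2\nu}\varphi^{(k)}_{\text{1D}}(\xi+2-2\nu)\,d\xi$ and $\tfrac12\int_{-1+2\nu}^{1}\varphi^{(k)}_{\text{1D}}(\xi-2\nu)\,d\xi$. After the obvious change of variables these become integrals of $\varphi^{(k)}_{\text{1D}}$ over $[1-2\nu,1]$ and over $[-1,1-2\nu]$ respectively, which sum to $\int_{-1}^{1}\varphi^{(k)}_{\text{1D}}\,d\xi = 2\,\delta_{k1}$ by orthonormality against $\varphi^{(1)}_{\text{1D}}$. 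Hence $F^{(1)}_i(t^{n+1}) = \tfrac12 F^{(1)}_{i-1-j}(t^n)\,(\text{contribution}) + \tfrac12 F^{(1)}_{i-j}(t^n)\,(\text{contribution})$ with the two contributions being $\int_{1-2\nu}^1 1\,d\xi = 2\nu$ and $\int_{-1}^{1-2\nu}1\,d\xi = 2-2\nu$; summing over $i$ and reindexing the (periodic, or compactly supported) shifts $i\mapsto i-j$ and $i\mapsto i-1-j$ gives $\sum_i F^{(1)}_i(t^{n+1}) = \nu\sum_i F^{(1)}_i(t^n) + (1-\nu)\sum_i F^{(1)}_i(t^n) = \sum_i F^{(1)}_i(t^n)$. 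This is exactly where the \emph{forward} propagation of cell edges matters: because the new interfaces are the forward images of the old ones, the $[1-2\nu,1]\cup[-1,1-2\nu]$ decomposition tiles the canonical element with no overlap and no gap, so the projection loses nothing.

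Next I would push this through Algorithm~\ref{alg2}. Step~3 produces $S^{(\ell)}_{ijk}$ by exactly the 1D formula applied along the line $\eta=\eta_k$ with $\nu_{jk}$, $I_{jk}$ in place of $\nu$, $j$; so by the computation above, $\sum_i S^{(1)}_{ijk} = \sum_i \bigl(\text{the }\varphi^{(1)}\text{-coefficient of the input along line }k\bigr)$ for each fixed $j,k$. Step~4 sets $F^{(\ell),\text{new}}_{ij} = \sum_k \omega_k S^{(\ell)}_{ijk}$, and since $\sum_k \omega_k = 2$ is the exact Gauss-Legendre integral of $1$ over $[-1,1]$, and since the $\varphi^{(1)}$-coefficient of the $k$-th input line is itself $\sum_{\ell} F^{(\ell)}_{ij}\varphi^{(\ell)}(\cdot,\eta_k)$ averaged in $\xi$ — which, again by $\xi$-orthonormality, is $\tfrac12\sum_{\ell}F^{(\ell)}_{ij}\int_{-1}^1\varphi^{(\ell)}(\xi,\eta_k)d\xi$ — one checks that $\sum_{ij}F^{(1),\text{new}}_{ij} = \sum_{ij}F^{(1)}_{ij}$; the only subtlety is that the $M$-point Gauss rule integrates the relevant polynomials in $\eta$ exactly (degree $\le 2M-1$ suffices, and the integrands here are the tensor-product basis functions restricted appropriately, all of degree $\le M-1$ in $\eta$), so no quadrature error enters. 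Finally, for Algorithm~\ref{alg3}: each of the nine stages is either an $x$-advection or a $v$-advection of the form handled by Algorithm~\ref{alg2} — in the $v$-stages the roles of $x$ and $v$ are swapped and $a$ depends on the (possibly time-dependent) field $\bar\E$, but mass conservation of the scalar solver is \emph{independent of the sign, magnitude, or $t$-dependence of the advection speed} (the argument above never used $\nu\in[0,1]$ beyond the tiling, and for time-dependent $\bar\E$ the characteristic map in \eqref{eqn:time_dep_move} is still a volume-preserving shift in the transverse variable), so composing the nine stages conserves $\sum F^{(1)}$. Since the Poisson/field-construction steps (Steps~1--2 of Algorithm~\ref{alg3}, Step~2 of Algorithm~\ref{alg1}) do not touch $f^h$, they trivially preserve mass, and the proof is complete.

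The main obstacle is the bookkeeping in Step~4 of Algorithm~\ref{alg2}: one must be careful that ``the $\varphi^{(1)}$-coefficient of the reconstructed 2D solution'' really equals the Gauss-weighted sum of the 1D $\varphi^{(1)}$-coefficients along the lines, which requires that the vertical reconstruction be exact for the polynomial degrees in play — i.e., that $M$ Gauss points integrate degree-$(M-1)$ polynomials in $\eta$ exactly, and that the basis $\varphi^{(\ell)}(\xi,\eta)$ factors compatibly with the line-by-line representation $\varphi^{(\ell)}(\xi,\eta_k)$. Everything else is a routine orthonormality computation.
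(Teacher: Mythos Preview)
Your proposal is correct and follows essentially the same route as the paper: both arguments sum the update for the cell average $F^{(1)}$, change variables $s=\xi+2-2\nu_{jk}$ and $s=\xi-2\nu_{jk}$, reindex the $i$-sum by periodicity to merge the two pieces into a full integral over $[-1,1]$, and then invoke exactness of the $M$-point Gauss rule in $\eta$ together with orthonormality of the basis to collapse everything to $\sum_{ij}F^{(1)}_{ij}$. Your presentation is slightly more modular---proving the 1D identity first and then lifting through the quadrature lines and the splitting stages---whereas the paper works directly with the 2D formula for $F^{(1),\text{new}}_{ij}$, but the substantive steps are identical.
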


\begin{proof}
It suffices to show that the semi-Lagrangian scheme is conservative on the 
quasi-1D problem given by \eqref{eqn:quasi1d_adv}
with periodic boundary conditions in $x$.
Using the notation of Algorithm \ref{alg2},
the update for the mean value in cell $\Tm_{ij}$ can be written in the form:
\[
\begin{split}
F^{(1),\text{new}}_{ij}  =& \, \, \sum_{k=1}^M \omega_k \, S^{(1)}_{ijk} \\
= & \, \, \frac{1}{2} \sum_{k=1}^M \sum_{m=1}^{M(M+1)/2} \omega_k \,
 	F_{i-1-I_{jk} \, j}^{(m)}
	\int_{-1}^{-1 + 2 \nu_{jk}} \varphi^{(m)}(\xi+2-2\nu_{jk},\eta_k)  \,
	 d\xi  \\
+ & \, \, \frac{1}{2} \sum_{k=1}^M \sum_{m=1}^{M(M+1)/2} \omega_k \,
	F_{i-I_{jk} \, j}^{(m)}
	\int_{-1+2\nu_{jk}}^{1}  \varphi^{(m)}(\xi-2\nu_{jk},\eta_k)   \,
	 d\xi.
\end{split}
\]
The total integral of $f^h(t,x,v)$ over the entire computational domain 
can then be written as
\begin{align*}
\begin{split}
	\sum_{i,j} F^{(1),\text{new}}_{ij} =&  \, \,  \frac{1}{2}
	\sum_{i,j,k,m} \omega_k \, F_{i-1-I_{jk} \, j}^{(m)}
	\int_{-1}^{-1 + 2 \nu_{jk}} \varphi^{(m)}(\xi+2-2\nu_{jk},\eta_k)  \,
	 d\xi  \\
+ & \, \, \frac{1}{2} \sum_{i,j,k,m} \omega_k \, F_{i-I_{jk} \, j}^{(m)}
	\int_{-1+2\nu_{jk}}^{1}  \varphi^{(m)}(\xi-2\nu_{jk},\eta_k)   \,
	 d\xi.
\end{split}
\end{align*}
We make the following change of variables in the integrals above:
\[
	s=\xi+2-2\nu_{jk} \qquad \text{and} \qquad
	s=\xi-2\nu_{jk},
\]
respectively, which yields:
\begin{align*}
\begin{split}
	\sum_{i,j} F^{(1),\text{new}}_{ij} =&  \, \,  \frac{1}{2}
	\sum_{i,j,k,m} \omega_k \, F_{i-1-I_{jk} \, j}^{(m)}
	\int_{1-2\nu_{jk}}^{1} \varphi^{(m)}(s,\eta_k)  \,
	 ds  \\
+ & \, \, \frac{1}{2} \sum_{i,j,k,m} \omega_k \, F_{i-I_{jk} \, j}^{(m)}
	\int_{-1}^{1-2\nu_{jk}}  \varphi^{(m)}(s,\eta_k)   \,
	 ds.
\end{split}
\end{align*}
Since we are summing over all $i$, we shift the first index of $F$
without changing the total sum; this step allows us to combine
the two integrals into one:
\begin{align*}
	\sum_{i,j} F^{(1),\text{new}}_{ij} =&  \, \,  \frac{1}{2}
	\sum_{i,j,k,m} \omega_k \, F_{ij}^{(m)}
	\int_{-1}^{1} \varphi^{(m)}(s,\eta_k)  \,
	 ds.
\end{align*}
Since $\varphi^{(m)}(s,\eta)$ is polynomial of degree at most $M-1$ in 
$\eta$, the Gaussian quadrature represented by the sum
over $k$ is exact:
\begin{align*}
\sum_{i,j} F^{(1),\text{new}}_{ij} =&  \, \,  
	\sum_{i,j} \sum_m F_{ij}^{(m)} \left[ \frac{1}{4}
	\int_{-1}^{1}  \int_{-1}^{1}  \varphi^{(m)}(s,\eta) \, ds \, d\eta \right] \\
	=& \, \, \sum_{i,j} \sum_m F_{ij}^{(m)} \delta_{1m} = \sum_{i,j} F_{ij}^{(1)},
\end{align*}
where $\delta_{1m}$ is the Kronecker delta.
\end{proof}

\begin{thm}[Positivity in the mean] Let $M$ denote the
\label{thm:positivity}
spatial order of accuracy and let
\begin{equation}
\label{eqn:Kceil}
	K := \left\lceil \frac{M}{2} \right\rceil,
\end{equation}
where $\lceil \cdot \rceil$ denotes the {\it ceiling} operation\footnote{this function
takes a real input and rounds up to the smallest integer that is larger
than or equal to the input.}.
Let $f^h(t^n, x, v)$ be a function defined on the broken finite element space 
\eqref{eqn:broken_space} with $q=M-1$, and let
\begin{equation}
\tilde{f}^h_{ij}(t^n, \xi, \eta) := f^h(t^n, x, v) \Bigl|_{\Tm_{ij}},
\end{equation}
where $(\xi,\eta) \in [-1,1] \times [-1,1]$ are the variables on the canonical
element.
Assume that $\tilde{f}^h_{ij}(t^n, \xi, \eta)$ is
non-negative at all of the following $2MK$ points:
\begin{align}
\label{eqn:pos_pts_1}
\left(\xi, \eta \right) = \left( \xi^{L}_{\ell j k}, \eta_k \right), \qquad \text{where} \quad 
	 \xi^{L}_{\ell j k} &:=  \nu_{jk} \left(1-s_{\ell} \right)+s_{\ell}, \\
\label{eqn:pos_pts_2}
\left(\xi, \eta \right) = \left( \xi^{R}_{\ell j k}, \eta_k \right), \qquad \text{where} \quad 
\xi^{R}_{\ell j k} &:=  \nu_{jk} \left(1+s_{\ell} \right)-1, 
\end{align}
$\forall k=1,\ldots,M$ and $\forall \, \ell=1,\ldots,K$.
In the above expression,  $\nu_{jk}$ is given by \eqref{eqn:Ijk_and_nujk},
$s_{\ell}$ denotes the $\ell^\text{th}$ quadrature point in the standard
1D Gauss-Legendre rule with $K$ points, and $\eta_k$ 
denotes the $k^\text{th}$ quadrature point in the standard
1D Gauss-Legendre rule with $M$ points.

If one time-step in one coordinate direction is taken using the semi-Lagrangian scheme as described in Algorithm \ref{alg2}
with  $f^h(t^n, x, v)$ as the initial condition, then the approximate
solution at the end of this time-step
will have a non-negative average in every element (independent of the time step $\Delta t$):
\[
	F^{(1),\text{new}}_{ij} \ge 0, \quad \forall \, \Tm_{ij} \in \Omega^h.
\]
\end{thm}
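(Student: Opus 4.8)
The plan is to reduce the whole statement to two elementary facts about Gauss--Legendre quadrature: its weights are positive, and a $K$-point rule integrates polynomials of degree at most $2K-1$ exactly. Since $\varphi^{(1)}\equiv 1$, Step~4 of Algorithm~\ref{alg2} gives $F^{(1),\text{new}}_{ij}=\sum_{k=1}^{M}\omega_k\,S^{(1)}_{ijk}$ with all $M$-point Gauss--Legendre weights $\omega_k>0$, so it suffices to prove $S^{(1)}_{ijk}\ge 0$ for each fixed $k$.

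First I would set $\ell=1$ in the definition of $S^{(\ell)}_{ijk}$ from Algorithm~\ref{alg2}. This exhibits $S^{(1)}_{ijk}$ as $\tfrac12\int_{-1}^{1}$ of the exactly advected solution along the line $v=v_{jk}$, integrated over the two sub-intervals created by the advected old interface, namely $[-1,-1+2\nu_{jk}]$ and $[-1+2\nu_{jk},1]$. On the first sub-interval the integrand is a rigid translate of $\tilde f^h_{i-1-I_{jk},j}(t^n,\cdot,\eta_k)$ and on the second it is a rigid translate of $\tilde f^h_{i-I_{jk},j}(t^n,\cdot,\eta_k)$; in both cases, because $f^h$ lies in the broken space \eqref{eqn:broken_space} with $q=M-1$, the integrand is a polynomial in $\xi$ of degree at most $M-1$.

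Next I would apply the $K$-point Gauss--Legendre rule, with $K=\lceil M/2\rceil$ as in \eqref{eqn:Kceil}, to each of the two sub-integrals separately. This is precisely where the value of $K$ is forced: the rule is exact for polynomials of degree $\le 2K-1$, and $2K-1\ge M-1$ exactly when $K\ge M/2$. After the two affine changes of variable from $[-1,1]$ onto $[-1,-1+2\nu_{jk}]$ and onto $[-1+2\nu_{jk},1]$, one obtains an identity of the form
\[
S^{(1)}_{ijk}=\frac{\nu_{jk}}{2}\sum_{\ell=1}^{K} w_\ell\,p^{\mathrm{L}}_{\ell jk}\;+\;\frac{1-\nu_{jk}}{2}\sum_{\ell=1}^{K} w_\ell\,p^{\mathrm{R}}_{\ell jk},
\]
where $w_\ell>0$ are the $K$-point Gauss weights and $p^{\mathrm{L}}_{\ell jk},\,p^{\mathrm{R}}_{\ell jk}$ are the values of the two polynomial pieces at the mapped nodes; a short computation identifies those mapped nodes with the points \eqref{eqn:pos_pts_1}--\eqref{eqn:pos_pts_2}, so that each $p^{\mathrm{L}}_{\ell jk}$, $p^{\mathrm{R}}_{\ell jk}$ is a value of $f^h(t^n,\cdot,\cdot)$ at one of those points. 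Since $\nu_{jk}=\tfrac{a(v_{jk})\Delta t}{\Delta x}-I_{jk}\in[0,1]$ by construction of the floor, the prefactors $\nu_{jk}/2$ and $(1-\nu_{jk})/2$ are nonnegative for \emph{every} $\Delta t$, which is where the claimed independence of the time step comes from.

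Finally I would close with an indexing remark: $I_{jk}$ and $\nu_{jk}$ depend only on $j$ and $k$, not on $i$, so as $i$ runs over all columns (cyclically, by the periodicity in $x$) the source cells $\Tm_{i-1-I_{jk},j}$ and $\Tm_{i-I_{jk},j}$ run over every cell of row $j$; the hypothesis, being imposed on $\tilde f^h_{ij}$ at the points \eqref{eqn:pos_pts_1}--\eqref{eqn:pos_pts_2} for \emph{every} $i$, therefore makes every term in the displayed identity nonnegative, whence $S^{(1)}_{ijk}\ge0$ and $F^{(1),\text{new}}_{ij}\ge0$. The count $2MK$ is then just (two sub-intervals) $\times$ ($M$ values of $k$) $\times$ ($K$ Gauss abscissae per sub-interval). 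I expect the only real work to be the affine-map bookkeeping of the previous paragraph --- verifying exactly which sub-integral pulls back to which neighbouring polynomial and at which canonical coordinate, with the degenerate case $\nu_{jk}=0$ simply collapsing the first sub-interval harmlessly. It is also worth stressing that this argument controls only the cell \emph{average}: nonnegativity of $f^h$ at finitely many abscissae does not make $f^h$ nonnegative throughout a cell, which is exactly why the pointwise limiter of \S\ref{sec:positivity} is still needed for global positivity.
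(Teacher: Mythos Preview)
Your proposal is correct and follows essentially the same route as the paper: rewrite the updated cell average as the $\eta$-quadrature sum of the two sub-interval integrals, recognize the integrands as shifted cell polynomials of degree at most $M-1$, replace each sub-integral exactly by its $K$-point Gauss--Legendre rule, and conclude by positivity of all weights. If anything you are more explicit than the paper---spelling out the Jacobian prefactors $\nu_{jk}/2$ and $(1-\nu_{jk})/2$, the exactness requirement $2K-1\ge M-1$, and the indexing observation that the hypothesis on every $i$ covers the source cells $i-1-I_{jk}$ and $i-I_{jk}$---all of which the paper leaves implicit.
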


\begin{proof}
Using the notation of Algorithm \ref{alg2},
the update for the mean-value in element $\Tm_{ij}$ can be written as
\[
\begin{split}
F^{(1),\text{new}}_{ij}  
= & \, \, \frac{1}{2} \sum_{k=1}^M  \omega_k \,
 	\int_{-1}^{-1 + 2 \nu_{jk}} \left\{ \sum_{m=1}^{M(M+1)/2} F_{i-1-I_{jk} \, j}^{(m)}
	\, \varphi^{(m)}(\xi+2-2\nu_{jk},\eta_k) \right\}  \,
	 d\xi  \\
+ & \, \, \frac{1}{2} \sum_{k=1}^M \omega_k \,
	\int_{-1+2\nu_{jk}}^{1} \left\{ \sum_{m=1}^{M(M+1)/2}F_{i-I_{jk} \, j}^{(m)} \, \varphi^{(m)}(\xi-2\nu_{jk},\eta_k)  \right\} \,
	 d\xi.
\end{split}
\]
We  note that the terms inside the brackets are simply shifted 
solution values, allowing us to express the above update as follows:
\[
\begin{split}
F^{(1),\text{new}}_{ij}  
=  \, \, \frac{1}{2} \sum_{k=1}^M  \omega_k \, \left\{
 	\int_{-1}^{-1 + 2 \nu_{jk}}  \, P_{ijk}^{R}(\xi) \, d\xi  
+  \int_{-1+2\nu_{jk}}^{1}  P_{ijk}^{L}(\xi)  \,
	 d\xi \right\},
\end{split}
\]
where
\begin{alignat*}{2}
P_{ijk}^{R}(\xi) &:= \tilde{f}^h_{i-1-I_{jk} \, j} (t^n, \xi+2-2\nu_{jk}, \eta_k) \quad
&& \text{for} \quad \xi \in \left[ -1, -1 + 2 \nu_{jk} \right], \\
P_{ijk}^{L}(\xi) &:=  \tilde{f}^h_{i-I_{jk} \, j} (t^n, \xi-2\nu_{jk}, \eta_k)  \quad
&& \text{for} \quad \xi \in \left[-1 + 2 \nu_{jk}, 1 \right].
\end{alignat*}
Since $P_{ijk}^{L}(\xi)$ and $P_{ijk}^{R}(\xi)$ are polynomials of degree at most $M-1$,
we can exactly evaluate each of the above integrals via Gauss-Legendre quadrature
rules using $K$ points (where $K$ is defined in \eqref{eqn:Kceil}):
\[
\begin{split}
F^{(1),\text{new}}_{ij}  
=  \, \, \frac{1}{2} \sum_{k=1}^M  \omega_k \, \left\{
 	\sum_{\ell=1}^{K}  \varpi_{\ell} P^{R}_{ijk}\left(\xi^{R}_{\ell j k}\right)   
+  \sum_{\ell=1}^{K}  \varpi_{\ell}  P^{L}_{ijk}\left(\xi^{L}_{\ell j k}\right)   \right\},
\end{split}
\]
where the $\varpi_{\ell}$'s  are the standard quadrature weights 
for Gauss-Legendre quadrature with $K$ points.

To conclude our proof, we note that since all of the quadrature weights in the above 
expression for $F^{(1),\text{new}}_{ij}$
are strictly positive,  we obtain positivity in the mean,
\[
	F^{(1),\text{new}}_{ij}  \ge 0,
\]
if $\tilde{f}^h_{ij}(t^n,\xi,\eta)$ is non-negative at all of the $2MK$ points defined in 
\eqref{eqn:pos_pts_1}--\eqref{eqn:pos_pts_2}.
\end{proof}

\subsection{Positivity-preserving  limiter}
\label{sec:positivity}
One of the key assumptions in the above proof of positivity in
the mean is the fact that solution prior to a time-step must
be positive at all of points defined in \eqref{eqn:pos_pts_1}--\eqref{eqn:pos_pts_2}. 
We show in this subsection how to
the limit the solutions, including the initial condition, so that
we achieve positivity at all of these points.
The key piece of technology necessary for achieving
this positivity is a modification of the limiter of
Zhang and Shu \cite{article:ZhShu10}. This limiter is simple
to implement and is completely local to each element. 

The solution on some element ${\mathcal T}$ can be
written as
\begin{equation}
	f^h(\xi,\eta) := \sum_{\ell=1}^{M(M+1)/2}
		F^{(\ell)} \, \varphi^{(\ell)}(\xi, \eta),
\end{equation}
where $M$ is the desired order of accuracy in space.
We assume that the element average is non-negative:
$F^{(1)} \ge 0$.
We sample this solution on a set of {\it test points}:
\begin{equation}
  \left(\xi_i, \, \eta_i\right) \in [-1,1] \times [-1,1] \qquad 
  \text{for} \qquad  i=1,2,\ldots,P,
\end{equation}
and define:
\begin{equation}
\label{eqn:pos_min}
    m :=  \min_{i=1,\ldots,P} f^h(\xi_i, \, \eta_i).
\end{equation}
Note that $m \in (-\infty, F^{(1)}]$.

The {\it limited solution} is defined as follows:
\begin{equation}
\label{eqn:limit_pos1}
	\tilde{f}^h(\xi,\eta) := F^{(1)}  + \theta \cdot \left( f^h(\xi,\eta) - F^{(1)} \right),
\end{equation}
where
\begin{equation}
\label{eqn:limit_pos2}
 \theta = \min \left\{ 1, \, \frac{ F^{(1)}}{F^{(1)} - m} \right\}.
\end{equation}
Note that $0 \le \theta \le 1$ and that 
\begin{align}
	\theta = \begin{cases}
		1 \quad &\text{if} \quad  0 \le m \le F^{(1)}, \\
	       \in [0,1) \quad &\text{if} \quad m < 0.
	       \end{cases}
\end{align}
This means that if the solution is already non-negative at each
of the test points, then
this limiter does not alter the solution. On the other hand, if the
solution on the element is negative at any of the test points,
then the high-order corrections are {\it damped}
until the solution is again non-negative. We are guaranteed that as $\theta \rightarrow 0$,
the solution will eventually become non-negative on the entire element since
$F^{(1)} \ge 0$.
%
%

In practice we implement the limiting strategy as follows:
\begin{enumerate}
\item During each of the stages labelled 3--9 in Algorithm \ref{alg3},
we apply the positivity limiter with the test
 points given by  \eqref{eqn:pos_pts_1}--\eqref{eqn:pos_pts_2}.
In this case $P=2MK$, where $M$ is the desired
 order of accuracy in space and $K$ is defined by 
 \eqref{eqn:Kceil}.
As proved in Theorem \ref{thm:positivity}, this 
guarantees that in each stage the approximate solution
remains positive in the mean.

\smallskip

\item After all of the stages of Algorithm \ref{alg3} have been carried out,
we apply the positivity limiter one more time to the solution, 
this time with the test points taken as
the $P=M^2$ Gauss-Legendre quadrature points on $[-1,1] \times [-1,1]$.
This final limiting provides some additional positivity enforcement
and allows us to compute a variety of integrals of the form \eqref{eqn:int_functional}
with function values that are non-negative. This is particularly useful
in computing the $L_1$-norm \eqref{eqn:L1norm_num}, the
total energy  \eqref{eqn:energy_num}, and the entropy \eqref{eqn:entropy_num}.
\end{enumerate}

\section{Numerical examples}
\label{sec:numericaL_examples}
In this section we apply the proposed scheme to a
variety of numerical test cases.  
We begin in \S \ref{subsec:linadv}
by considering a linear advection equation with
a velocity field that produces solid body rotation. This example
is primarily used to show the benefits of switching from second
to fourth-order operator splitting strategies. In \S \ref{subsec:forced}
we verify the order of accuracy of the method on a forced Vlasov-Poisson
equation where we know the exact solution. In the subsequent
three subsection we consider three standard problems for the
Vlasov-Poisson system:  \S \ref{subsec:twostream} the two-stream
instability,  \S \ref{subsec:weaklandau} weak Landau damping,
and  \S \ref{subsec:stronglandau} strong Landau damping.
Unless otherwise stated, all simulations  below
 are done with $5^{\text{th}}$ order in space and with the positivity-preserving
 limiters as described in \S\ref{sec:positivity} turned on.

\subsection{Linear advection}
\label{subsec:linadv}
We first consider a linear advection under a divergence-free velocity field:
\begin{equation}
	q_{,t} + {\bf u} \cdot  \nabla q = 0.
\end{equation}
We take the computational domain to be $[0,1] \times [0,1]$ 
and the velocity field to be solid body rotation around $(0.5, 0.5)$:
\begin{equation}
 {\bf u} = \left( u(y), v(x) \right) = \left(  \pi \left( 2 y - 1 \right), \pi \left( 1 - 2 x  \right) \right)^T.
\end{equation}
The initial condition is taken to be a smooth, compactly supported bump that
is centered at $(x_0, y_0) = (0.4, 0.5)$:
\begin{equation}
  q(0,x,y) = \begin{cases}
  	\cos^6\left( \frac{5 \pi}{3} r \right) & \quad \text{if}
		\quad r \le 0.3, \\
	0 & \quad \text{otherwise},
  \end{cases}
\end{equation}
where 
\begin{equation}
r = \sqrt{(x-x_0)^2+(y-y_0)^2}.
\end{equation}
This problem, just as the Vlasov-Poisson system, is solved via 
operator splitting on the two operators:
\begin{align}
	\text{Problem ${\mathcal A}$:} & \quad q_{,t} + u(y) \, q_{,x} = 0, \\
	\text{Problem ${\mathcal B}$:} & \quad q_{,t} + v(x) \, q_{,y} = 0.
\end{align}
We run the initial condition out to time $t=1$, at which point it should
return to its initial state.
The errors are computed using the relative $L_2$ errors defined by equation
\eqref{eqn:L2error_2D} with $M=5$ and varying $\Delta x=\Delta y$.
See \ref{sec:L2error_2D} for more details.
Convergence studies with Strang and the fourth-order operator
splitting results are shown in Table \ref{table:Advection}.

\begin{table}
\begin{center}
\begin{Large}
\begin{tabular}{|c||c|c||c|c|}
\hline
{\normalsize \text{\bf Mesh}} & 
{\normalsize \text{\bf SL2 Error}} & {\normalsize\text{$\log_2${\bf (Ratio)}}} &
{\normalsize \text{\bf SL4 Error}} & {\normalsize\text{$\log_2${\bf (Ratio)}}} \\
\hline \hline 
{\normalsize $10^2$}  & {\normalsize $3.215\times 10^{-1}$} & {\normalsize --}    & {\normalsize $5.679\times 10^{-1}$}  & {\normalsize --} \\
\hline
{\normalsize $20^2$}  & {\normalsize $7.185\times 10^{-2}$} & {\normalsize 2.16}  & {\normalsize $3.113\times 10^{-2}$}  & {\normalsize 4.19} \\
\hline
{\normalsize $40^2$}  & {\normalsize $1.578\times 10^{-2}$} & {\normalsize 2.19}  & {\normalsize $1.691\times 10^{-3}$}  & {\normalsize 4.20} \\
\hline
{\normalsize $80^2$}  & {\normalsize $3.923\times 10^{-3}$} & {\normalsize 2.01}  & {\normalsize $1.010\times 10^{-4}$}  & {\normalsize 4.07}  \\
\hline
{\normalsize $160^2$} & {\normalsize $9.890\times 10^{-4}$} & {\normalsize 1.99}  & {\normalsize $6.220\times 10^{-6}$}  & {\normalsize 4.02} \\
\hline
{\normalsize $320^2$} & {\normalsize $2.454\times 10^{-4}$} & {\normalsize 2.01}  & {\normalsize $3.843\times 10^{-7}$}  & {\normalsize 4.02} \\
 \hline
{\normalsize $640^2$} & {\normalsize $6.136\times 10^{-5}$} & {\normalsize 2.00}  & {\normalsize $2.390\times 10^{-8}$}  & {\normalsize 4.01} \\
\hline
\end{tabular}
\end{Large}
\end{center}
\caption{Convergence study for the linear advection equation. Shown
are the relative errors computed via \eqref{eqn:L2error_2D}
at time $t=1$.  All calculations were done with $5^{\text{th}}$ order accuracy in space using the positivity-preserving limiters and
a CFL number of 5.00, where 
$\text{CFL} := \Delta t\max\left\{ \max_y|u(y)| / \Delta x, \,
\max_x|v(x)| / \Delta y\right\}$.
SL2 refers to the Strang split semi-Lagrangian scheme and
SL4 to the fourth-order split semi-Lagrangian scheme.\label{table:Advection}}

\end{table}

\subsection{A forced problem: verifying order of accuracy}
\label{subsec:forced}
Next we consider an example of a forced Vlasov-Poisson equation where we
have an exact solution.
The forced Vlasov-Poisson system is
\begin{gather}
f_{,t} + v f_{,x} + E f_{,v} = \psi(t,x,v), \\
E_{,x} =   \int_{-\infty}^{\infty} f(t,x,v) \, dv - \sqrt{\pi},
\end{gather}
on $(t,x,v) \in [0, \infty) \times [ -\pi, \pi ] \times (-\infty, \infty)$
with periodic boundary conditions in $x$.
We take the following source term:
\begin{equation}
\begin{split}
\psi(t,x,v) = \frac{1}{2} \sin(2x-2\pi t) e^{-\frac{1}{4}\left(4v-1\right)^2}
\Bigl\{ & \left( 2\sqrt{\pi}+1 \right) \left( 4 v - 2\sqrt{\pi} \right) \\ & - 
	\sqrt{\pi} \left( 4v-1 \right) \cos(2x - 2\pi t) \Bigr\}. 
\end{split}
\end{equation}
The exact solution in this case is
\begin{align}
f(t,x,v) &= \left\{ 2 - \cos\left(2x - 2\pi t \right) \right\} e^{-\frac{1}{4} \left(4 v- 1 \right)^2}, \\
E(t,x) &= -\frac{\sqrt{\pi}}{4} \, \sin\left( 2x - 2\pi t \right).
\end{align}

The numerical scheme for this forced problem is the same as the one
described in \S \ref{sec:numericaL_method} with two minor modifications.
First, the two operators in the operator split scheme are
\begin{align}
	\text{Problem ${\mathcal A}$:} & \quad f_{,t} + v \, f_{,x} = \psi(t,x,v), \\
	\text{Problem ${\mathcal B}$:} & \quad f_{,t} + E(t,x) \, f_{,v} = 0,
\end{align}
which means that Problem ${\mathcal A}$ is slightly modified
from the unforced Vlasov-Poisson system.
The modified  ${\mathcal A}$ still has the same characteristics
as the case with no source term; the only difference is that the
solution is no longer constant along the characteristics. 
In order to advance $f$ forward under the influence
of operator ${\mathcal A}$,  we use the method
of characteristics and obtain the following solution:
\begin{equation}
    f(t^{n+1},x,v) =  f(t^n,x-v\Delta t,v) + 
    \int_{t^n}^{t^{n+1}} \psi(s, x+v(s-t^{n+1}),v) \, ds.
\end{equation}
The time integral in the above expression can be easily
exactly evaluated; we omit the details here.

The second modification comes from the fact
that with a non-zero source term,
it is no longer true that $E_{,t} = - \rho u$. This means that the
time interpolation described in \S 
\ref{sec:high_order_semiL_alg} must be slightly modified.
Instead of using $E_{,t} = - \rho u$, \eqref{eqn:Ett}, and \eqref{eqn:Ettt},
we make use of the following modified formulas:
\begin{align}
E_{,t}(t,x) &= -\rho u + C_1, \\
E_{,tt}(t,x) &=  {\mathbb E}_{,x} - \rho  E + C_2, \\
E_{,ttt}(t,x) &=  \left( 2\rho u E \right)_{,x} - {\mathbb F}_{,xx} + E \left( \rho u \right)_{,x}
	+ \rho^2 u  + C_3,
\end{align}
where
\begin{align}
C_1 &:= \frac{\sqrt{\pi}}{4}+ \frac{\sqrt{\pi}}{8} (4 \pi-1) \cos(2 x-2 \pi t),\\
C_2 &:= \frac{3 \sqrt{\pi}+4 \pi-16 \sqrt{\pi^5}}{16}  \sin(-2 x+2 \pi t)
+\frac{\pi}{16}  \sin(4 x-4 \pi t),\\
C_3 &:= -\frac{\pi}{4} + \frac{7 \sqrt{\pi}+16 \pi-64 \sqrt{\pi^7}}{32} 
 \cos(2 x-2 \pi t)-\frac{3 \pi}{16} \cos(4 x-4 \pi t).
\end{align}
In the above expression we used the shorthand notation:
\[
	x := x^1, \quad E := E^1, \quad
	u := u^1, \quad {\mathbb E} := {\mathbb E}^{11},
	\quad \text{and} \quad {\mathbb F} := {\mathbb F}^{111}.
\]

We run the initial condition out to time $t=1$, at which point it should
return to its initial state.
Convergence studies on various grids on the domain
$(x,v) \in \left[-\pi, \pi \right] \times \left[-\pi, \pi \right]$ with Strang and the fourth-order operator
splitting results are shown in Table \ref{table:VPsource}.  
We compute
the errors in an identical manner to the linear test problem presented
in the previous section using the relative $L_2$ errors defined by equation
\eqref{eqn:L2error_2D} with $M=5$.
See \ref{sec:L2error_2D} for more details.

\begin{table}
\begin{center}
\begin{Large}
\begin{tabular}{|c||c|c||c|c|}
\hline
{\normalsize \text{\bf Mesh}} & 
{\normalsize \text{\bf SL2 Error}} & {\normalsize\text{$\log_2${\bf (Ratio)}}} &
{\normalsize \text{\bf SL4 Error}} & {\normalsize\text{$\log_2${\bf (Ratio)}}} \\
\hline \hline 
{\normalsize $10^2$}   & {\normalsize $5.210\times 10^{-1}$} & {\normalsize --}    & {\normalsize $9.493\times 10^{-1}$}  & {\normalsize --} \\
\hline
{\normalsize $20^2$}   & {\normalsize $1.433\times 10^{-1}$} & {\normalsize 1.86}  & {\normalsize $2.715\times 10^{-1}$}  & {\normalsize 1.81} \\
\hline
{\normalsize $40^2$}   & {\normalsize $1.640\times 10^{-2}$} & {\normalsize 3.13}  & {\normalsize $1.652\times 10^{-2}$}  & {\normalsize 4.04} \\
\hline
{\normalsize $80^2$}   & {\normalsize $3.438\times 10^{-3}$} & {\normalsize 2.26}  & {\normalsize $7.079\times 10^{-4}$}  & {\normalsize 4.55}  \\
\hline
{\normalsize $160^2$}  & {\normalsize $8.333\times 10^{-4}$} & {\normalsize 2.04}  & {\normalsize $3.434\times 10^{-5}$}  & {\normalsize 4.37} \\
\hline
{\normalsize $320^2$}  & {\normalsize $2.068\times 10^{-4}$} & {\normalsize 2.01}  & {\normalsize $1.962\times 10^{-6}$}  & {\normalsize 4.13} \\
 \hline
{\normalsize $640^2$}  & {\normalsize $5.161\times 10^{-5}$} & {\normalsize 2.00}  & {\normalsize $1.203\times 10^{-7}$}  & {\normalsize 4.03} \\
\hline
{\normalsize $1280^2$} & {\normalsize $1.290\times 10^{-5}$} & {\normalsize 2.00}  & {\normalsize $7.509\times 10^{-9}$}  & {\normalsize 4.00} \\
\hline

\end{tabular}
\end{Large}
\end{center}
\caption{Convergence study for the forced Vlasov-Poisson equation.  
All calculations presented here are $5^{\text{th}}$ 
order in space and were run with a CFL number of 5. 
Shown are the relative errors computed via \eqref{eqn:L2error_2D}
at time $t=1$.  SL2 refers to the Strang split semi-Lagrangian scheme and
SL4 to the fourth-order split semi-Lagrangian scheme.
Since the positivity-preserving limiters as described in 
\S \ref{sec:positivity} don't guarantee positivity in the mean in the 
presence of a source term, we have turned them off for this convergence
study only.
\label{table:VPsource}}

\end{table}

\subsection{Two-stream instability}
\label{subsec:twostream}
The two-stream instability problem has become a standard
benchmark to test numerical Vlasov solvers, and has been
used as such by several authors (e.g., 
\cite{article:Heath10,article:BanksHitt10,
article:QiuCh10,article:FiSo03,christlieb2006gfp,article:ChKn76}).
We use the following initial distribution function
\begin{equation}
f(t=0,x,v) = \frac{v^2}{\sqrt{8\pi}} \left( 2 - \cos
	\left(\frac{x}{2}\right) \right) e^{-\frac{v^2}{2}},
\end{equation}
and solve on the domain $(x,v) \in \left[-2\pi, 2\pi \right] \times
\left[-2\pi, 2\pi \right]$.  Results for time $t=45$ are presented in
Figure \ref{fig:two-stream} for various mesh sizes.  
In Figure \ref{fig:two-stream-slice} we present vertical cross-sections
of the solution taken at $x=0$ for the same mesh sizes.  

Figures \ref{fig:two-stream} and \ref{fig:two-stream-slice} clearly
demonstrate the ability of the discontinuous Galerkin methodology
to approximate very rough data, something that is more difficult with
methods that act over larger stencils. The results shown in these
figures indicate far more structure than what is shown in
other recent work, including results from the WENO method \cite{article:QiuCh10, article:BanksHitt10}
and an explicit DG method that uses a piecewise constant
representation of the distribution function, $f$, and a piecewise
quadratic representation of the electric potential $\phi$ \cite{article:Heath10}.

In Figure 
\ref{fig:two-stream-slice-neg} we demonstrate the effects of adding the
posivitity-preserving limiter. We see that even without limiting, the base scheme 
already does a reasonable job of not producing large negative
values in the distribution function. With the positivity-preserving limiters
we are able to remove these small positivity violations.
In Figure \ref{fig:two-stream-consv} we plot four quantities that are exactly
conserved by the continuous Vlasov-Poisson equation, but only
approximately conserved in our numerical discretization:
$L_1$-norm of $f$ \eqref{eqn:L1norm}, $L_2$-norm of $f$ \eqref{eqn:L2norm}, 
total energy \eqref{eqn:energy}, and
total entropy \eqref{eqn:entropy}.
In particular, we use the numerical approximations to 
\eqref{eqn:L1norm}--\eqref{eqn:entropy} as given by equations 
\eqref{eqn:L1norm_num}--\eqref{eqn:entropy_num} in 
\ref{sec:num_int}.
We note that it is difficult to obtain accurate values for the total entropy \eqref{eqn:entropy},
because there are many values where $f$ becomes very small.

\begin{figure}
\begin{center}
   \includegraphics[width=60mm]{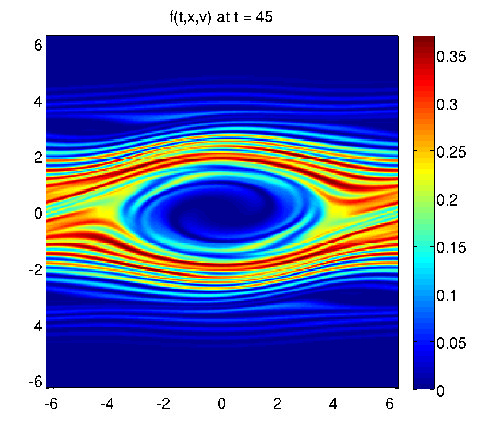}
   \includegraphics[width=60mm]{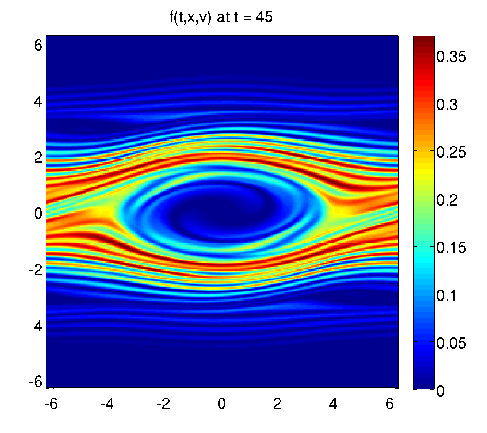}
   
   \vspace{2mm}
   
   \includegraphics[width=60mm]{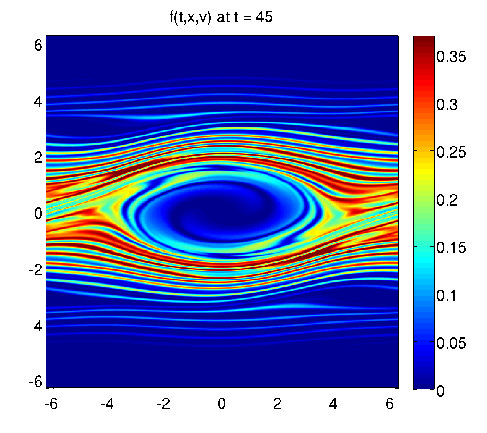}
   \includegraphics[width=60mm]{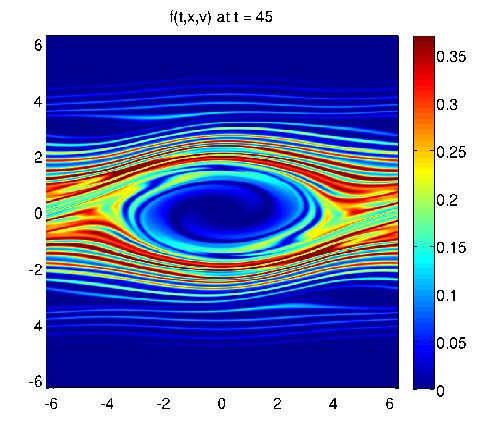}
   
   \vspace{2mm}

   \includegraphics[width=60mm]{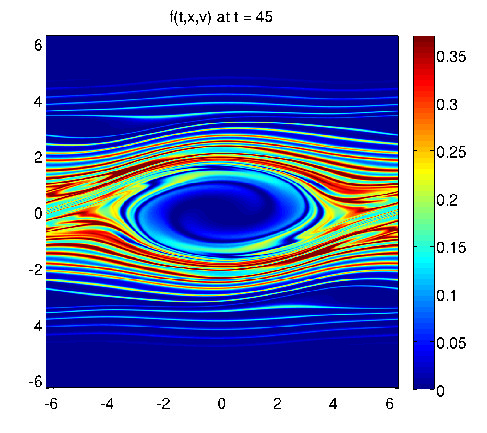}
   \includegraphics[width=60mm]{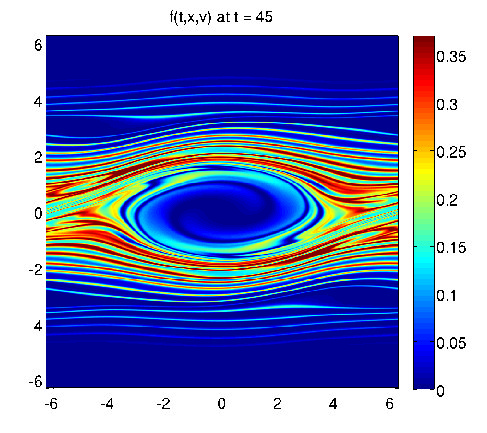}

  \caption{
  The two-stream instability problem.
 The panels in the left-hand column are results using the $2^{\text{nd}}$
    order Strang splitting method.
    The panels in the right-hand column are results using the $4^{\text{th}}$
    order splitting method.
     All simulations are
    $5^{\text{th}}$ order in space.
    The mesh sizes for the first, second and third rows are 
    \mbox{$(m_x, m_v) = (65, 65)$},
    \mbox{$(m_x, m_v) = (129, 129)$}, and
    \mbox{$(m_x, m_v) = (255, 255)$}, respectively.  
    All solutions use the
    positivity-preserving algorithm. The above figures were produced by plotting
    the numerical solution at each of the $5 \times 5$ Gaussian quadrature points in
    each mesh element.
	\label{fig:two-stream}}
\end{center}
\end{figure}

\begin{figure}
\begin{center}
   \includegraphics[width=58mm]{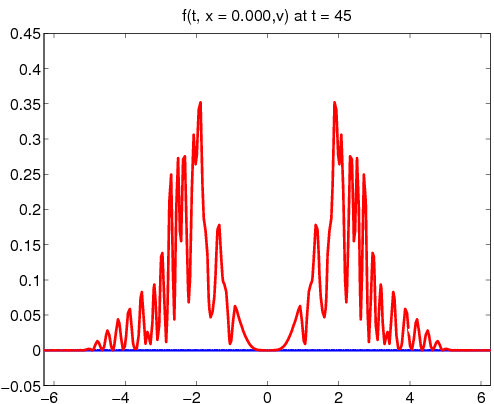} \quad
   \includegraphics[width=58mm]{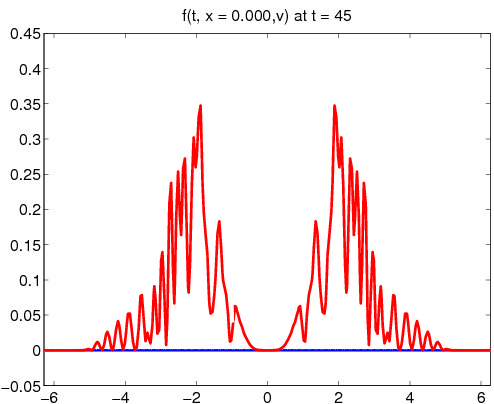}
   
   \vspace{4mm}
   
   \includegraphics[width=58mm]{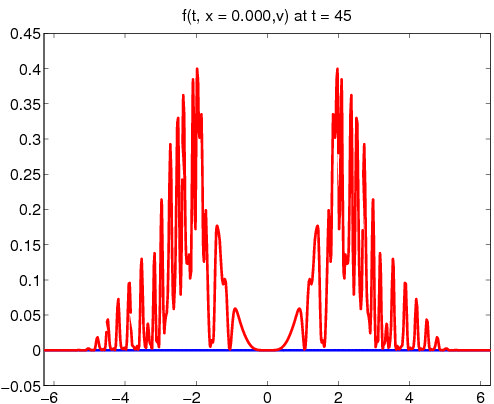} \quad
   \includegraphics[width=58mm]{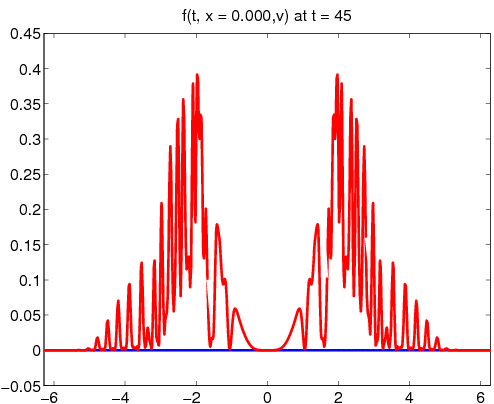}
   
   \vspace{4mm}

   \includegraphics[width=58mm]{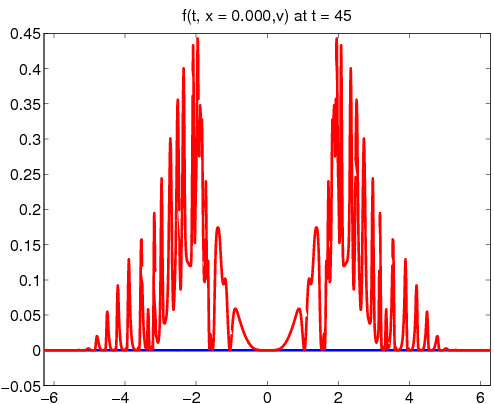} \quad
   \includegraphics[width=58mm]{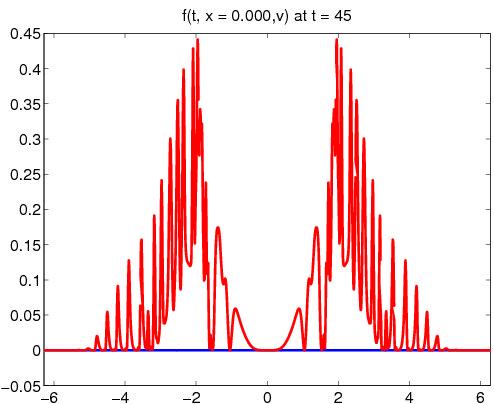}

  \caption{The two-stream instability problem.
    The panels in the left-hand column are results using the $2^{\text{nd}}$
    order Strang splitting method.
    The panels in the right-hand column are results using the $4^{\text{th}}$
    order splitting method.
     All simulations are
    $5^{\text{th}}$ order accurate in space.
    The mesh sizes for the first, second and third rows are 
    \mbox{$(m_x, m_v) = (65, 65)$},
    \mbox{$(m_x, m_v) = (129, 129)$}, and
    \mbox{$(m_x, m_v) = (255, 255)$}, respectively.  
    All solutions use the
    positivity-preserving algorithm. Each figure above represents
    the numerical solution at $x=0$ and use 5 Gaussian quadrature points 
    for each cell in the $v$-coordinate.
    The above solutions were computed with an odd number of elements
    in each coordinate direction in order to easily obtain a slice of the solution at $x=0$.  
	\label{fig:two-stream-slice}}
\end{center}
\end{figure}

\begin{figure}
\begin{center}
   \includegraphics[width=58mm]{outputSL4_129_slice.jpg} \quad
   \includegraphics[width=58mm]{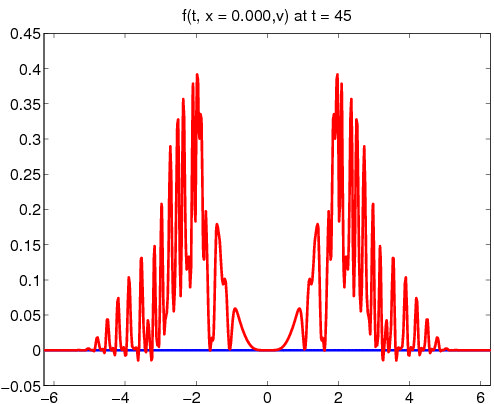}

  \caption{The two-stream instability problem.
    These panels demonstrate the effect of the positivity preserving-limiter.
    The result on the left is 
    the $4^{\text{th}}$ order splitting algorithm with limiters turned on,
	and the result on the right hand side is the same algorithm with the limiters turned off.  
	Both pictures represent a slice of the solution
	at $x = 0$ and final time $t=45$.  
	Both results represent a mesh of size  $(m_x, m_v) = (129, 129)$ and are
	$5^{\text{th}}$ order accurate in space; an odd number of grid elements 
    are used in order to easily obtain function values.
	We further note that $\min_i f(x_i,v_i) = -2.020 \times 10^{-2}$ 
	for the solution without the limiter and $\min_i f(x_i,v_i) = 7.000 \times 10^{-12}$ 
    for the limited solution, where the minimum
	is taken over all $5\times5$ Gaussian quadrature points over every mesh element.  
    \label{fig:two-stream-slice-neg}}
\end{center}
\end{figure}

\begin{figure}
\begin{center}
   \includegraphics[width=58mm]{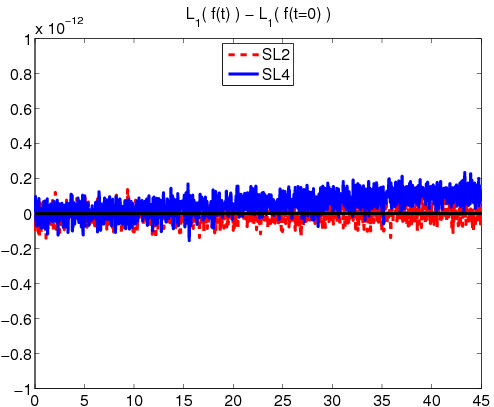} \quad
   \includegraphics[width=58mm]{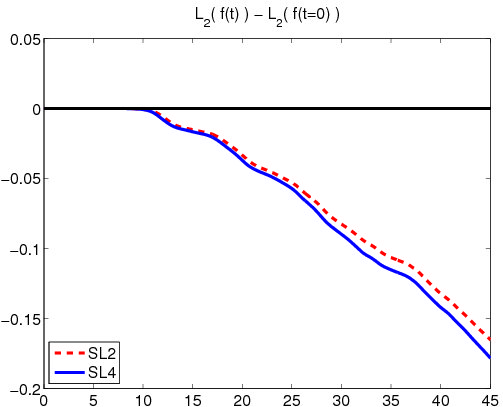} 

   \vspace{4mm}

   \includegraphics[width=58mm]{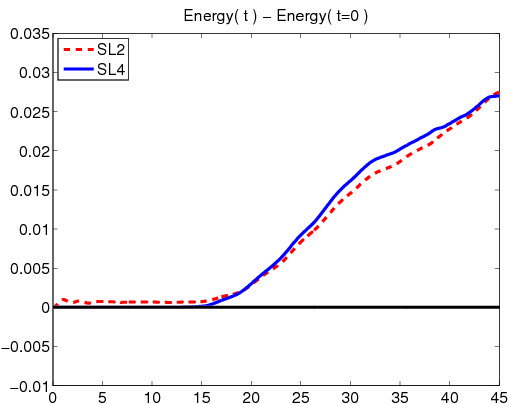} \quad
   \includegraphics[width=58mm]{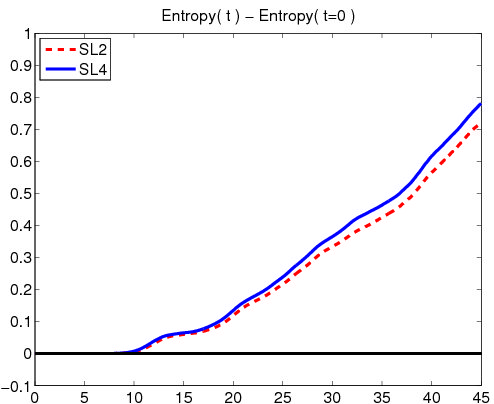} 

  \caption{
    The two-stream instability problem.
	Shown in these panels are the deviations of the $L_1$ 
    norm of $f$ (top-left), $L_2$ norm of $f$ (top-right), 
	total energy (bottom-left), and total entropy (bottom-right) from their
	initial values.  All simulations 
	use a constant CFL number of 2.0, and are obtained from a mesh of size
    $(m_x, m_v) = (129,129)$. The domain for this problem is
	$(x,v) \in [-2\pi, 2\pi] \times [-2\pi,2\pi]$.
	Each simulation is $5^{\text{th}}$ order accurate in space
    and is positivity preserving.
	\label{fig:two-stream-consv}}
\end{center}
\end{figure}

\subsection{Weak Landau damping}
\label{subsec:weaklandau}
Landau damping has been extensively studied both numerically
\cite{article:ChKn76,article:Heath10, article:Zhou01} and
 analytically (e.g., the work of Mouhot and Villani
\cite{article:MoVi10}). Just as the two-stream instability problem,
Landau damping has become a favorite standard test case. It is particularly
useful since the
linear decay rates of the $L_2$-norm of the electric field are
well-known.

We use the following initial distribution function
\begin{equation}
\label{eqn:landau_damping}
	f(t=0,x,v) = \frac{1}{\sqrt{2\pi}} \Bigl( 1 + \alpha
		\cos(k x) \Bigr) e^{-\frac{v^2}{2}},
\end{equation}
with $\alpha = 0.01$ and $k = 0.5$ on the domain
$(x,v) \in \left[-2\pi, 2\pi \right] \times \left[-2\pi, 2\pi \right]$.  Because
$\alpha$ is a small parameter, we expect to see results that closely agree with
the linear theory, where the electric field decays exponentially.
In Figure \ref{fig:weak-Efield} we present this decay provided by 
\[
	\log\left( \| E( t, \cdot) \|_{L_2} \right) := \log\sqrt{\int_{-2\pi}^{2\pi} \left|E(t,x)\right|^2 \ dx}
\]
versus time for two different mesh sizes. 
Our computed decay rate matches the linear decay rate, $\gamma=-0.1533$.
In Figure \ref{fig:weak-consv} 
we again plot the deviations of several quantities that are conserved
by the continuous Vlasov-Poisson system from their
initial values: $\|f\|_{L_1}, \|f\|_{L_2}$, total energy, and entropy.
We again use the numerical approximation to these norms given by
\eqref{eqn:L1norm_num}--\eqref{eqn:entropy_num} in 
\ref{sec:num_int}.
Our results are comparable to what is reported for example by Qiu
and Christlieb \cite{article:QiuCh10}.

\begin{figure}
\begin{center}
   \includegraphics[width=58mm]{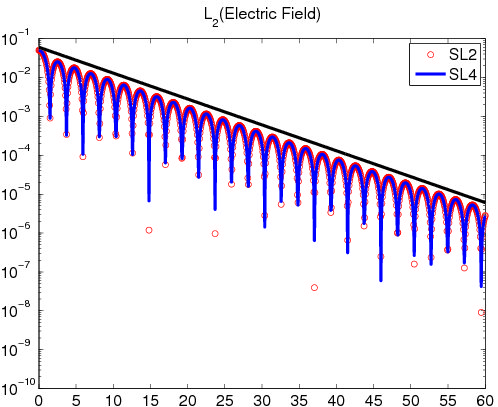} \quad
   \includegraphics[width=58mm]{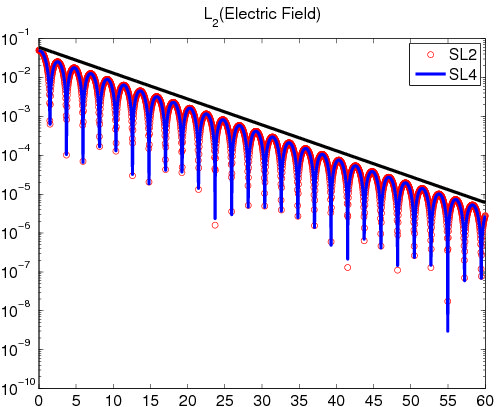}

  \caption{The weak Landau damping problem.
	Shown in the panels are semi-log plots of the $L_2$-norm of the electric field. 
    Simulations use a constant CFL number of 2.0
	and are $5^{\text{th}}$ order accurate in space.  All simulations
	use the positivity-preserving limiter.
	The figure on the left represents a mesh of size 
    \mbox{$(m_x,m_v) = (64,128)$} 
    and the result on the right was represents a mesh
	of size 
    \mbox{$(m_x, m_v) = (128,256)$}, both on the
    domain $(x,v) \in [-2\pi,2\pi] \times [-2\pi,2\pi]$.  
    Both simulations match the theoretical decay rate of $\gamma = -0.1533$, and to demonstrate
	this we plot the line defined by $y = 0.06 e^{\gamma t}$.
	One should note that the discrepancy in the two plots is due to the fact
	that twice as many time points in the second plot as the first one.
	\label{fig:weak-Efield}}
\end{center}
\end{figure}

\begin{figure}
\begin{center}
   \includegraphics[width=50mm]{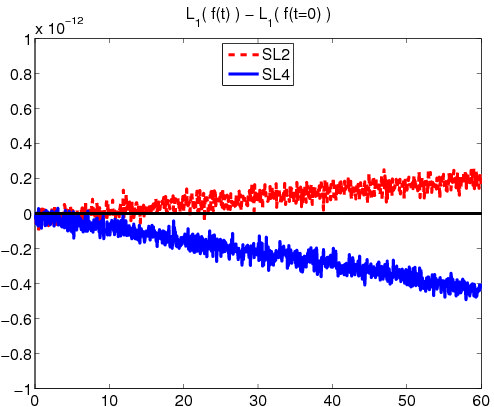} \qquad
   \includegraphics[width=50mm]{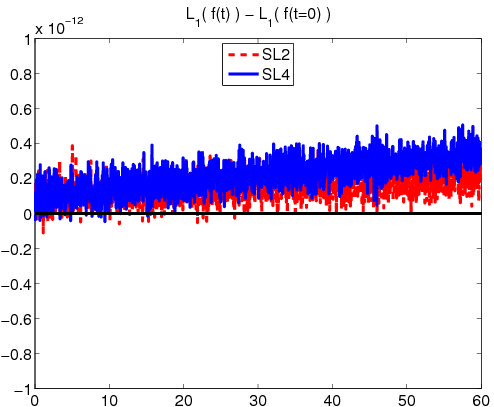} 

   \vspace{2mm}

   \includegraphics[width=50mm]{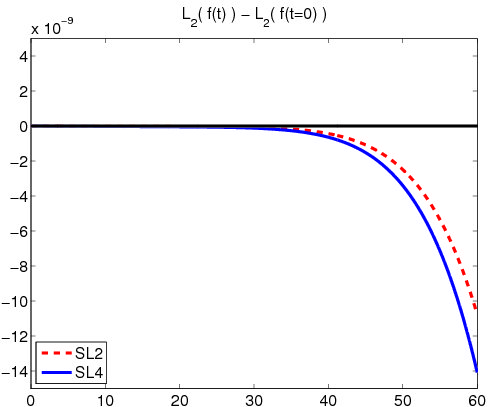} \qquad
   \includegraphics[width=50mm]{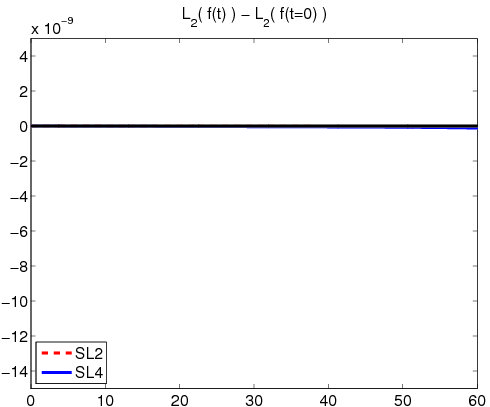} 

   \vspace{2mm}
   
   \includegraphics[width=50mm]{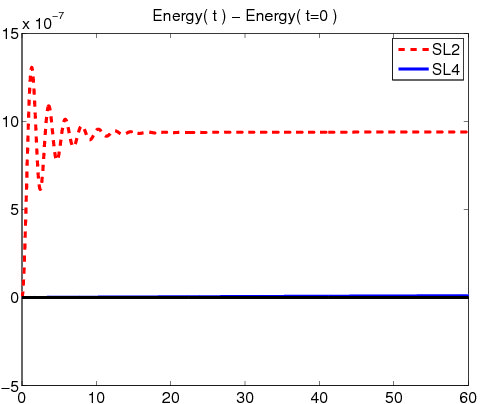} \qquad
   \includegraphics[width=50mm]{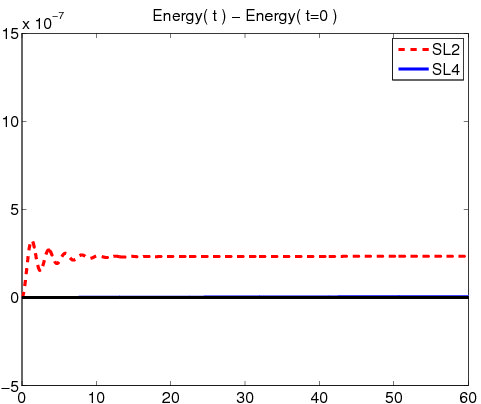}

   \vspace{2mm}

   \includegraphics[width=50mm]{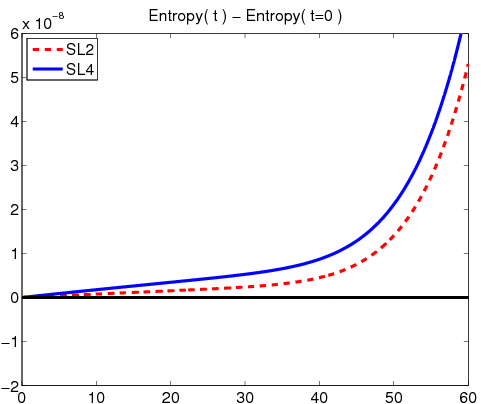} \qquad
   \includegraphics[width=50mm]{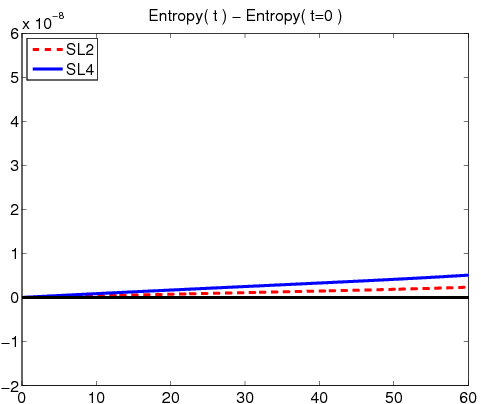}

  \caption{The weak Landau damping problem. 
     Shown in the panels are the deviations of various quantities that
     are conserved by the exact equations from their
     initial values.  All simulations use a constant CFL number 
     of 2.0, are $5^{\text{th}}$ order accurace in space, and are positivity
     preserving.
     The mesh size for the left hand column is 
     \mbox{$(m_x, m_v) = (64, 128)$} and the mesh size for the right hand column
     is \mbox{$(m_x, m_v) = (128, 256)$}.  We note that the largest
     deviation for total energy for the $4^{\text{th}}$ order algorithm is on the 
     order of $10^{-10}$.
	\label{fig:weak-consv}}
\end{center}
\end{figure}

\subsection{Strong Landau damping}
\label{subsec:stronglandau}
The initial condition is again given by \eqref{eqn:landau_damping},
this time with $\alpha = 0.5$ and $k = 0.5$ on the domain
 $(x,v) \in \left[-2\pi, 2\pi \right] \times
\left[-2\pi, 2\pi \right]$.  The time evolution of the distribution function is shown in the
panels of Figure \ref{fig:strong-2dpics}.
These images are comparable to what is shown in Qiu and Christlieb
\cite{article:QiuCh10}, but we are again able to capture more fine scale structure with
the discontinuous Galerkin approach.

A semi-log plot of the $L_2$-norm of the electric field is provided in Figure \ref{fig:strong-Efield}, and
decay rates are computed by sampling the solution at data points.  We find that
the initial linear decay rate is approximately
$\gamma_1 \approx -0.292$ which is close to the value of $-0.243$ computed by
Zaki et al. \cite{article:Zaki88}, closer still to the value of $-0.281$ computed by 
Cheng and Knorr \cite{article:ChKn76}, but much larger than the value of $-0.126$
computed by Heath et. al  \cite{article:Heath10}.  
In this same figure we also estimate the growth rate due to
particle trapping and find it to be approximately $\gamma_2=0.0815$;
this number also differs from the value reported by Heath et al. \cite{article:Heath10}:
$\gamma_2 = 0.0324$.
The initial linear decay was computed by taking the
maximum of the first two peaks located at $t\approx 2.45$ and $t \approx 4.54$.  
For the particle trapping growth regime, we sampled the 
maximum of the solution at the two peaks located at 
$t\approx 2.33$ and $t \approx 2.84$.
We postulate that the
difference between our computed growth rates
 and those of Heath et al. \cite{article:Heath10}
stems from the fact that we are using piecewise quartic polynomials to
represent the distribution function, while they are using only piecewise
constants. This issue should be further investigated.

 In Figure \ref{fig:strong-consv64} 
we again plot the deviations of several quantities that are conserved
by the continuous Vlasov-Poisson system from their
initial values: $\|f\|_{L_1}, \|f\|_{L_2}$, total energy, and entropy.
In particular, we use the numerical approximations to 
\eqref{eqn:L1norm}--\eqref{eqn:entropy} as given by equations 
\eqref{eqn:L1norm_num}--\eqref{eqn:entropy_num} in 
\ref{sec:num_int}.
Our results are comparable to what is reported for example by Qiu
and Christlieb \cite{article:QiuCh10}.

Finally, we note that in both our weak and strong Landau damping
simulations we made $V_{\text{max}}=2\pi$ instead of the more commonly used
value of $V_{\text{max}}=5$. The reason for  this is that we noticed
that between roughly $v=5$ and $v=6$ the distribution function $f(t,x,v)$
still had a non-negligable amplitude on the order of about $10^{-6}$; the precise
behavior of strong and weak Landau damping in this region is shown in 
Figure \ref{fig:strong-slice}. Therefore,
truncating at $V_{\text{max}}=5$ caused additional errors when tracking
various conserved quantities; we found improvements in these errors
when taking $V_{\text{max}}=2\pi$.




\begin{figure}
\begin{center}
   \includegraphics[width=41mm]{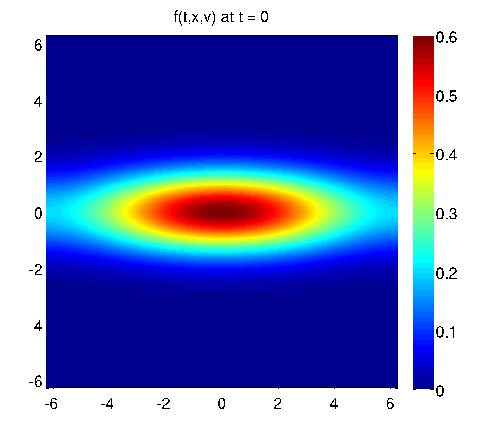} \hspace{-4mm}
   \includegraphics[width=41mm]{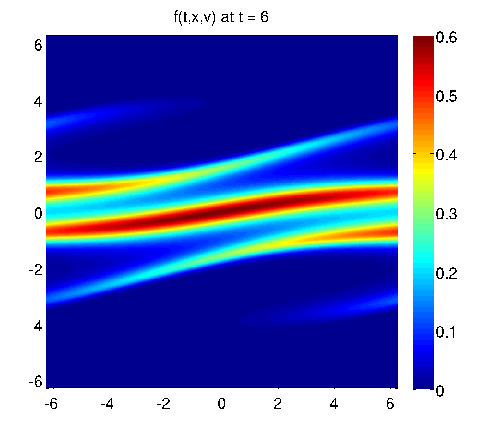} \hspace{-4mm}
   \includegraphics[width=41mm]{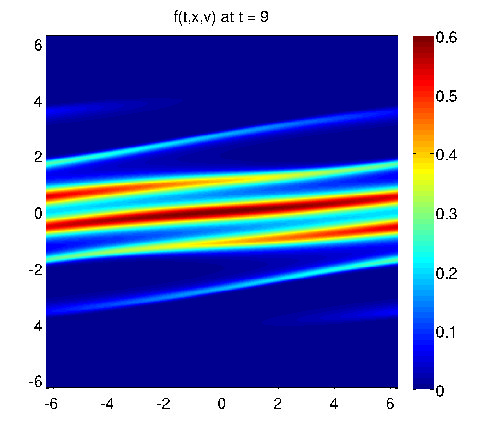} 

   \vspace{2mm}

   \includegraphics[width=41mm]{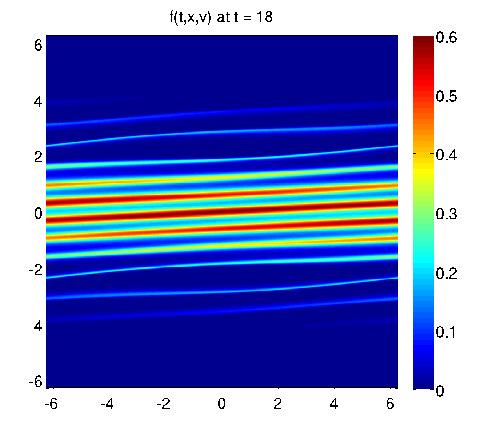} \hspace{-4mm}
   \includegraphics[width=41mm]{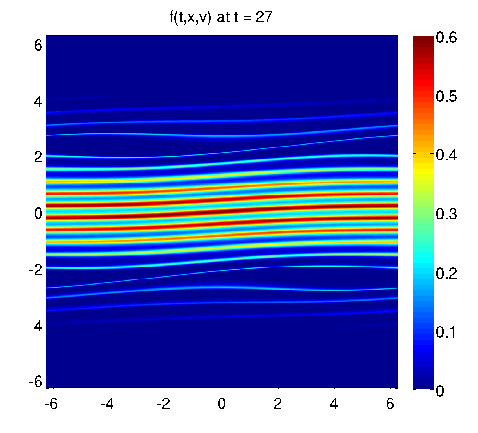} \hspace{-4mm}
   \includegraphics[width=41mm]{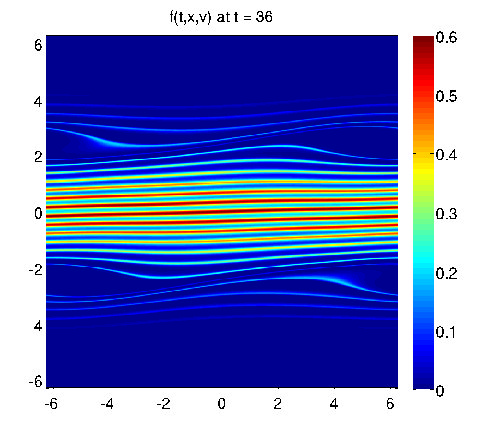} 
   
   \vspace{2mm}

   \includegraphics[width=41mm]{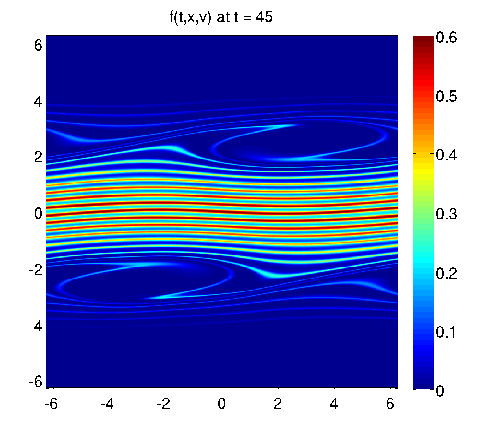} \hspace{-4mm}
   \includegraphics[width=41mm]{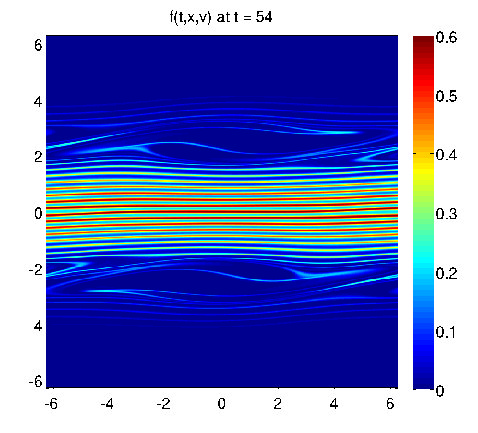} \hspace{-4mm}
   \includegraphics[width=41mm]{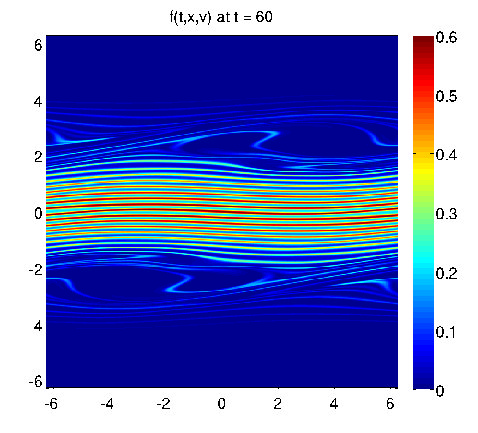} 

  \caption{The strong Landau damping problem. Shown in the panels
  are  the distribution function at various points in time.
	This simulation was run with a constant CFL number of 2.0 
    on a mesh of size
	\mbox{$(m_x, m_v) = (128, 256)$} using
	 $5^{\text{th}}$ order accuracy in space and
	  the positivity-preserving limiters.  It is clear from these
	  plots that the high-order
	discontinuous Galerkin method is able to capture much of the fine-scale
	structure for the
	solution. \label{fig:strong-2dpics}}
\end{center}
\end{figure}

\begin{figure}
\begin{center}
   \includegraphics[width=58mm]{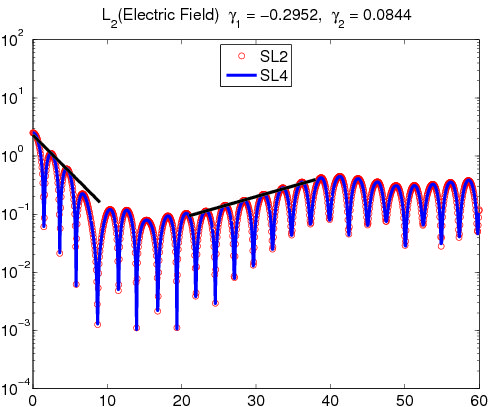} \quad
   \includegraphics[width=58mm]{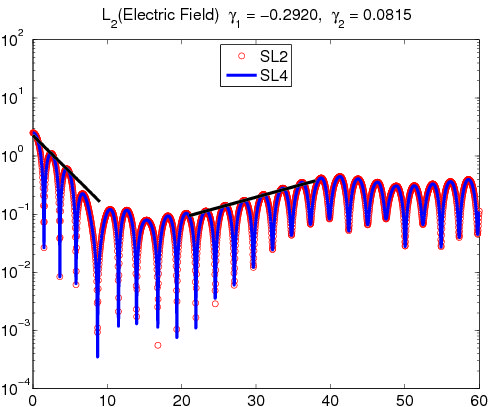}

  \caption{The strong Landau damping problem.
	Shown in these panels are semi-log plots of the $L_2$ norm of the 
    electric field with two different
	resolutions; the mesh size for the the figure on the left is
   {$(m_x, m_v) = (64,128)$} and the 
	figure on the right is
	{$(m_x, m_v) = (128, 256)$.} 
	Both simulations use the positivity preserving limiter, are 
    $5^{\text{th}}$ order accuracy in space and use
    a constant CFL number of $2.0$.
	In each panel, $\gamma_1$ refers
    to the slope of the initial decay, and $\gamma_2$ refers to the growth rate
    between times $t=20$ and $t=40$.
	\label{fig:strong-Efield}}
\end{center}
\end{figure}

\begin{figure}
\begin{center}
   \includegraphics[width=54mm]{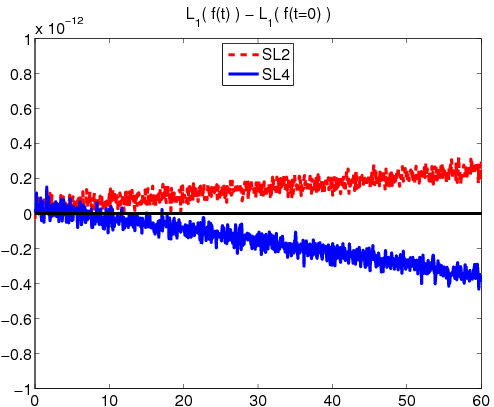} \qquad
   \includegraphics[width=54mm]{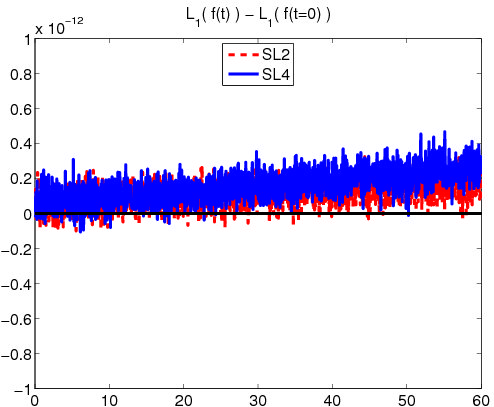}

   \includegraphics[width=54mm]{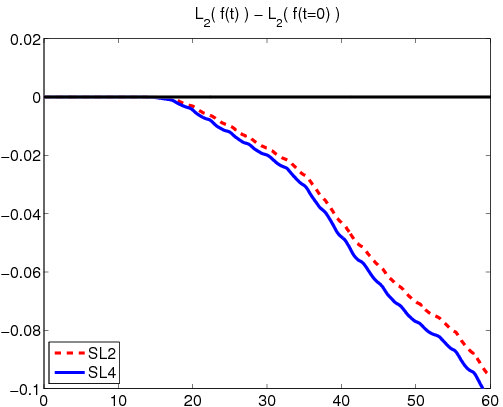} \qquad
   \includegraphics[width=54mm]{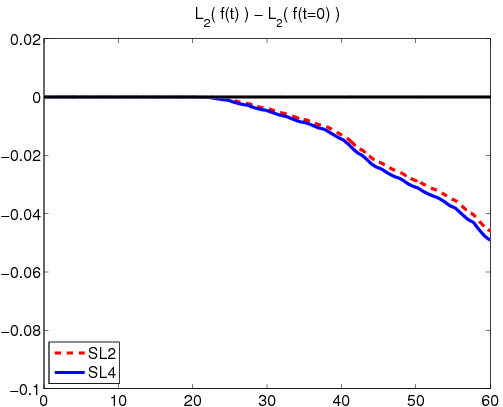}

   \includegraphics[width=54mm]{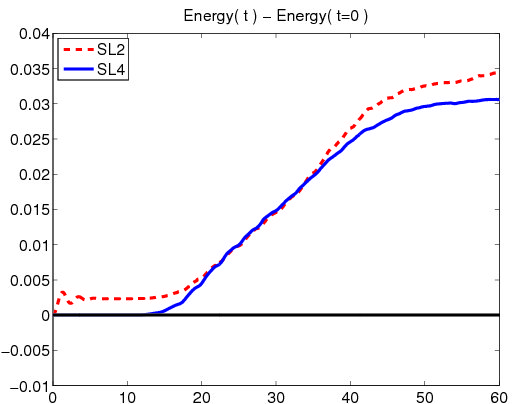} \qquad
   \includegraphics[width=54mm]{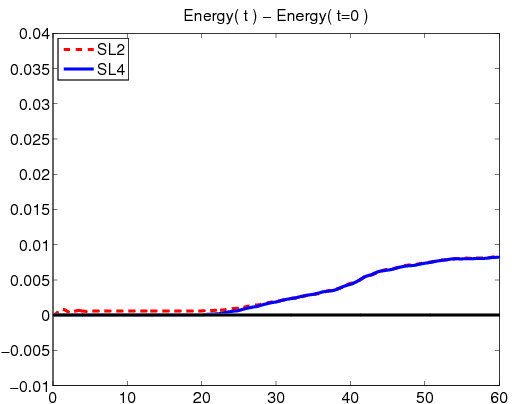}

   \includegraphics[width=54mm]{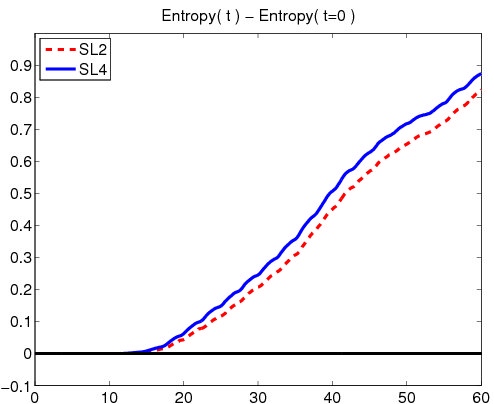} \qquad
   \includegraphics[width=54mm]{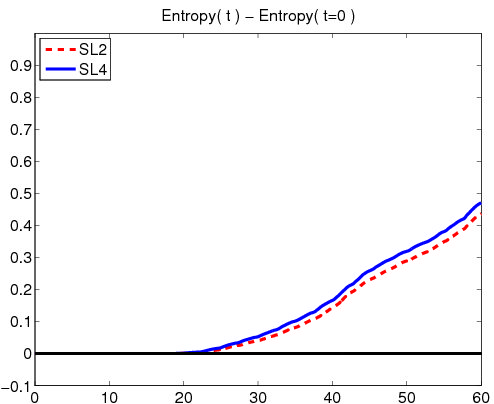} 

  \caption{The strong Landau damping problem.
	Simulation results for the $L_1$ norm (first row), $L_2$ norm (second row), 
	energy (third row), and entropy (bottom row)
	for strong Landau damping.  All simulations use a constant 
	CFL number of $2.0$ and are $5^{\text{th}}$ order accurate in space.  
	The mesh size for the left column
	is \mbox{$(m_x,m_v) = (64,128)$} and the mesh size for 
	the right column is \mbox{$(m_x, m_v) = ( 128, 256)$.}
	\label{fig:strong-consv64}}
\end{center}
\end{figure}

\begin{figure}
\begin{center}
   \includegraphics[width=58mm]{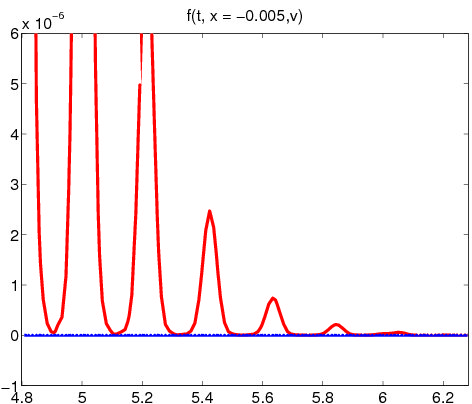} \quad
   \includegraphics[width=58mm]{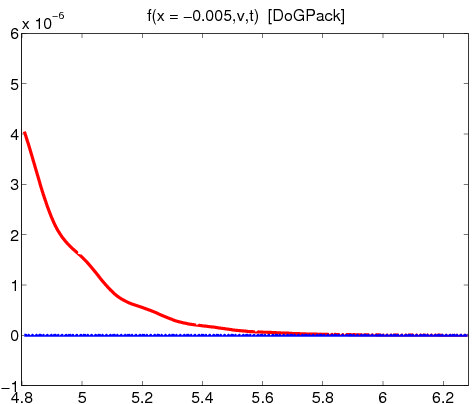}

  \caption{
    A comparison of vertical slices for strong landau damping (left) and 
    weak Landau damping (right) at time $t=60$.  The simulations for each 
    panel use a grid resolution
    of \mbox{$(m_x, m_v) = (128,256)$}, each are 
    $5^{\text{th}}$ order accuracy in space, and
    each use the positivity-preserving limiter.
    The CFL number for each simulation is $2.0$.  
    We note that the solution is non-zero for $|v| > 5$ in both cases, although
    there is much more activity in the case of strong Landau damping.  
    The plots suggests that the commonly used maximum velocity of $|v|=5$
    should be increased in order to get better accuracy in conservation of
    the quantities \eqref{eqn:L1norm_num}--\eqref{eqn:entropy_num}.
    \label{fig:strong-slice}}
\end{center}
\end{figure}

\section{Conclusions and future work}
\label{sec:conclusions}
We have described in this work a semi-Lagrangian discontinuous
Galerkin method for solving the 1+1 Vlasov-Poisson system.
This method was shown to have all of the following properties:
\begin{enumerate}
\item {\it Unconditionally stable};
 \item {\it High-order accurate in space} ($5^{\text{th}}$ order);
 \item {\it High-order accurate in time} ($4^{\text{th}}$ order);
\item {\it Mass conservative}; and
\item {\it Positivity-preserving}.
\end{enumerate}
The proposed method is based on a series quasi-1D semi-Lagrangian
advection steps coupled with a fourth-order operator splitting scheme.
The spatial discretization is handled via high-order discontinuous
Galerkin representations. The Poisson equation is solved to high-order
via a modified local DG method, where the boundary conditions
are set so that the discrete Laplacian matrix is by construction
LU factored. The scheme was applied to several
standard Vlasov-Poisson test cases, which demonstrated the 
accuracy and robustness of the proposed scheme.

The advantage of DG over other methods that are based on larger
stencils is its ability to represent very rough data. 
We showed that this feature is important in the case of the
two-stream instability and the Landau damping calculations
presented in \S \ref{sec:numericaL_examples}. With an explicit time-stepping
method, the price that is paid for this spatial localization
is a maximum CFL number that decreases with
the spatial order of accuracy. In the context of Vlasov-Poisson we
have tamed this problem by using the semi-Lagrangian framework.

Future work will focus on extending the results described in this
paper to higher-dimensional Vlasov-Poisson equations. Furthermore,
modifications of the current approach to both the non-relativisitc
and the relativistic Vlasov-Maxwell equations will be considered.

\bigskip

\noindent
{\bf Acknowledgements.}
The authors would like to thank the anonymous reviewers for their
valuable feedback.
This work was supported in part by NSF grants DMS-0711885 and 
DMS-1016202.

\appendix

\section{Numerical evaluation of conserved quantities}
\label{sec:num_int}
The conserved quantities defined in \eqref{eqn:L1norm} ($L_1$-norm),
\eqref{eqn:L2norm} ($L_2$-norm), \eqref{eqn:energy} (total energy),
and \eqref{eqn:entropy} (entropy) are used as diagnostics of
the numerical methods proposed in this work. 

In order to evaluate all of these conserved quantities in the
numerical evolution, we define the following functional:
\begin{equation}
\label{eqn:int_functional}
I^h\bigl(g(f^h)\bigr) := \frac{\Delta x \, \Delta v}{4} \, \sum_{i=1}^{m_x}
	\sum_{j=1}^{m_v} \sum_{k=1}^{M^2} \omega_{k} \,
	g\left( f^h\left(x_i + \frac{\xi_k \Delta x}{2} , v_j + \frac{\eta_{k} \Delta v}{2}  \right) \right),
\end{equation}
where $m_x$ is the number of elements in the $x$-direction,
$m_v$ is the number of elements in the $v$-direction, and $\omega_k$  and $\left(\xi_k, \eta_k \right)$
are the $M^2$ Gauss-Legendre quadrature weights and points
on $[-1,1] \times [-1,1]$, respectively.
Expression \eqref{eqn:int_functional} gives a numerical approximation to integrals of the form:
\begin{equation}
   I\left(g(f)\right) := \int_{-L}^{L} \int_{-\infty}^{\infty} g\left(f(x,v)\right) \, dv \, dx.
\end{equation}
Using \eqref{eqn:int_functional} we define the following numerical
approximations to the norms defined by \eqref{eqn:L1norm}--\eqref{eqn:entropy}:
\begin{align}
\label{eqn:L1norm_num}
	\| f^h \|_{L_1} &:= I^h\left( \left| f^h \right| \right), \\
\label{eqn:L2norm_num}
	\| f^h \|_{L_2} &:= \left\{  \frac{\Delta x \, \Delta v}{4} \, \sum_{i=1}^{m_x}
	\sum_{j=1}^{m_v} \sum_{\ell=1}^{M(M+1)/2}   \left[ F^{(\ell)}_{ij} \right]^2
	 \right\}^{\frac{1}{2}}, \\	
\label{eqn:energy_num}
	\text{Total energy} &:=  \frac{1}{2} I^h\left( v^2 \, f^h \right) 		
		 +  \frac{\Delta x}{4} \sum_{i=1}^{m_x} \sum_{\ell=1}^M
		 	\left[ E^{(\ell)}_i \right]^2, \\
\label{eqn:entropy_num}
	\text{Entropy} &:= -I^h\left( f^h \, \log(f^h) \right).
\end{align}

\section{Relative $L_2$-norm error in 1D}
\label{sec:L2error_1D}
Let $f(x)$ be the exact solution of some problem of interest. 
Let $f^h(x)$ denote an approximation to $f(x)$ using a discontinuous
Galerkin method.
On each element $f^{h}(x)$ and $f(x)$ can be written as
\begin{align}
	f^{h}(x) \biggl|_{\Tm_i} &= \sum_{k=1}^M  F^{(k)}_i \, \varphi^{(k)}(\xi), \\
	f(x) \biggl|_{\Tm_i} &= \sum_{k=1}^{\infty} {\mathcal F}^{(k)}_i \, \varphi^{(k)}(\xi),
\end{align}
respectively.
The relative $L_2$-norm of the difference on the domain $x\in[a,b]$
between the approximation,
$f^{h}(x)$,
and the exact solution, $f(x)$, is given by
\begin{equation}
\begin{split}	
	 \frac{\| f(x) - f^{h}(x) \|_{L_2}}{\| f(x) \|_{L_2}} =& \, \left\{
			\frac{\int_a^b \left[ f(x)-f^{h}(x) \right]^2 \, dx}{
			 \int_a^b f(x)^2 \, dx} \right\}^\half \\
			 =& \, \left\{
		\frac{\sum_{i=1}^N   \sum_{k=1}^M   \left[
		 F^{(k)}_i   - {\mathcal F}^{(k)}_i \right]^2
		 }{\sum_{i=1}^N   \sum_{k=1}^M   
		\left[ {\mathcal F}^{(k)}_i  \right]^2}
		 \right\}^{\half} + {\mathcal O}\left( \Delta x^M \right),
\end{split}
\end{equation}
where $N$ is the total number of grid elements and $\Delta x = (b-a)/N$.
Therefore, we take as our relative $L_2$-norm indicator the following
easily computable quantity:
\begin{equation}
\label{eqn:L2error_1D}
E_2(\Delta x,M) := \left\{
		\frac{\sum_{i=1}^N   \sum_{k=1}^M   \left[
		 F^{(k)}_i   - {\mathcal F}^{(k)}_i \right]^2
		 }{\sum_{i=1}^N   \sum_{k=1}^M   
		\left[ {\mathcal F}^{(k)}_i  \right]^2}
		 \right\}^{\half}.
\end{equation}

\section{Relative $L_2$-norm error in 2D}
\label{sec:L2error_2D}
Let $f(x,y)$ be the exact solution of some problem of interest. 
Let $f^h(x,y)$ denote an approximation to $f(x,y)$ using a discontinuous
Galerkin method.
On each element $f^{h}(x,y)$ and $f(x,y)$ can be written as
\begin{align}
	f^{h}(x,y) \biggl|_{\Tm_{ij}} &= \sum_{k=1}^{M(M+1)/2}
		  F^{(k)}_{ij} \, \varphi^{(k)}(\xi,\eta), \\
	f(x,y) \biggl|_{\Tm_{ij}} &= \sum_{k=1}^{\infty} {\mathcal F}^{(k)}_{ij} \, \varphi^{(k)}(\xi,\eta),
\end{align}
respectively.
The relative $L_2$-norm of the difference on the domain $(x,y)\in \left[a_x,b_x \right] \times 
\left[a_y, b_y \right]$
between the approximation,
$f^{h}(x,y)$,
and the exact solution, $f(x,y)$, is given by
\begin{equation}
\begin{split}
		& \frac{\| f(x,y) - f^{h}(x,y) \|_{L_2}}{\| f(x,y) \|_{L_2}} 
		 = \, \left\{
			\frac{\int_{a_x}^{b_x} \int_{a_y}^{b_y} \left[ f(x,y)-f^{h}(x,y) \right]^2 \, dy \, dx}{
			 \int_{a_x}^{b_x} \int_{a_y}^{b_y} f(x,y)^2 \, dy \, dx} \right\}^\half \\
		&\quad = \, \left\{
		\frac{\sum_{i=1}^{N_x} \sum_{j=1}^{N_y}   \sum_{k=1}^{M(M+1)/2}   \left[
		 F^{(k)}_{ij}  - {\mathcal F}^{(k)}_{ij} \right]^2
		 }{\sum_{i=1}^{N_x} \sum_{j=1}^{N_y}   \sum_{k=1}^{M(M+1)/2}   \left[
		 {\mathcal F}^{(k)}_{ij} \right]^2}
		 \right\}^{\half} + {\mathcal O}\left(\Delta x^M, \Delta y^M \right),
\end{split}
\end{equation}
where $N_x$ and $N_y$ are the the number of grid elements in each coordinate
direction, $\Delta x = (b_x-a_x)/N_x$, and $\Delta y = (b_y-a_y)/N_y$.
Therefore, we take as our relative $L_2$-norm indicator the following
easily computable quantity:
\begin{equation}
\label{eqn:L2error_2D}
  E_2(\Delta x, \Delta y,M) := \left\{
		\frac{\sum_{i=1}^{N_x} \sum_{j=1}^{N_y}   \sum_{k=1}^{M(M+1)/2}   \left[
		 F^{(k)}_{ij}  - {\mathcal F}^{(k)}_{ij} \right]^2
		 }{\sum_{i=1}^{N_x} \sum_{j=1}^{N_y}   \sum_{k=1}^{M(M+1)/2}   \left[
		 {\mathcal F}^{(k)}_{ij} \right]^2}
		 \right\}^{\half}.
\end{equation}

\end{document}